\theoremstyle{plain}
\newtheorem{theorem}{Theorem}
\newtheorem{lemma}[theorem]{Lemma}
\newtheorem{cor}[theorem]{Corollary}
\newtheorem{prop}[theorem]{Proposition}
\theoremstyle{definition}
\newtheorem{definition}[theorem]{Definition}
\numberwithin{equation}{section}
\newcommand{\pr}{^\prime}
\newcommand{\X}{\mathbb{X}_D}
\newcommand{\ex}{\mathbb{E}}
\newcommand{\D}{\mathcal{D}_D(x)}
\newcommand{\F}{\mathcal{F}_D(h)}
\newcommand{\sumst}{\sideset{}{^\star}\sum}
\DeclareRobustCommand\widecheck[1]{{\mathpalette\@widecheck{#1}}}
\def\@widecheck#1#2{%
    \setbox\z@\hbox{\m@th$#1#2$}%
    \setbox\tw@\hbox{\m@th$#1%
       \widehat{%
          \vrule\@width\z@\@height\ht\z@
          \vrule\@height\z@\@width\wd\z@}$}%
    \dp\tw@-\ht\z@
    \@tempdima\ht\z@ \advance\@tempdima2\ht\tw@ \divide\@tempdima\thr@@
    \setbox\tw@\hbox{%
       \raise\@tempdima\hbox{\scalebox{1}[-1]{\lower\@tempdima\box
\tw@}}}%
    {\ooalign{\box\tw@ \cr \box\z@}}}
\newcommand{\ve}{\varepsilon}
\begin{document}

\title[Class numbers in continued fraction families of real quadratic fields]
{Distribution of class numbers in continued fraction families of real quadratic fields}
\author{Alexander Dahl}
\address{Department of Mathematics and Statistics,
York University,
4700 Keele Street,
Toronto, ON,
M3J1P3
Canada}
\email{aodahl@yorku.ca}

\thanks{V.K. was partly supported by Charles University Mobility Fund and a grant of Deutsche Forschungs\-gemeinschaft.}

\author{V\'\i t\v ezslav Kala}
\address{Mathematisches Institut, Bunsenstr.~3-5, D-37073 G\"ottingen, Germany}
\address{Department of Algebra, Faculty of Mathematics and Physics, Charles University, Sokolov\-sk\' a 83, 18600 Praha~8, Czech Republic}
\email{vita.kala@gmail.com}

\subjclass[2010]{11R11, 11M20, 11A55}

\date{\today}

\keywords{Class number, real quadratic field, continued fraction family, random model}

\begin{abstract}
We construct a random model to study the distribution of class numbers in special families of real quadratic fields $\mathbb Q(\sqrt d)$ arising from continued fractions. 
These families are obtained by considering continued fraction expansions of the form $\sqrt {D(n)}=[f(n), \overline{u_1, u_2, \dots, u_{s-1}, 2f(n)}]$ with fixed coefficients $u_1, \dots, u_{s-1}$ and generalize well-known families such as Chowla's $4n^2+1$, for which analogous results were recently proved by Dahl and Lamzouri \cite{DL}.
\end{abstract}

\maketitle

\section{Introduction}

The class number of an algebraic number field measures to what degree unique factorization fails. The study of this invariant for quadratic number fields $\mathbb{Q}(\sqrt{d})$ where $d$ is a fundamental discriminant spans centuries, starting most seriously with the work of Gauss. A question of great interest is, what is the distribution of class numbers among quadratic fields?

For a discriminant $d$, define $h(d)$ to be the class number. For imaginary quadratic fields, Gauss conjectured that $h(d)$ increases without bound with $-d$, which was later proven by Heilbronn. Gauss also posed his famous class number problem: given a class number $h$, for how many discriminants $d$ do we have $h(d)=h$? This was settled for $h=1$ by Heegner, Baker, and Stark, and all imaginary quadratic fields have been catalogued for $h\leq 100$ due to the work of Watkins \cite{Wa}.

The situation in the case of positive discriminants is more difficult, however. Gauss conjectured that there are an infinite number of real quadratic fields with class number $1$, but this is still not known. The difficulty is reflected by Dirichlet's class number formula in this case, which is
\begin{equation}
\label{eq:dirichlet-class-number-formula}
	h(d)=\frac{L(1,\chi_d) \sqrt{d}}{\log \varepsilon_d},
\end{equation}
where $\varepsilon_d$ is the fundamental unit, which generates the (infinite) group of units. We have $\varepsilon_d = (a+b\sqrt{d})/2$, where $a$ and $b$ are the smallest positive integer solutions to the Pell equations $a^2-b^2 d=\pm 4$. The issue here is that, although the size of $L(1,\chi_d)$ is relatively stable, $ \varepsilon_d $ fluctuates considerably throughout a very large range. One strategy for dealing with this problem is to restrict study to specific families of discriminants where the fundamental units are controlled.

Two such examples are Yokoi's and Chowla's families of real quadratic fields $\mathbb Q(\sqrt{m^2+4})$ and $\mathbb Q(\sqrt{4m^2+1})$. In these families, the fundamental unit $\varepsilon_d$ is exactly given and is as small as possible, namely of order $\sqrt{d}$, producing large class numbers. 
The distribution of class numbers in Chowla's family was recently studied in \cite{DL}, and we direct the reader there for more details and a discussion on its relation to the upper bound for the size of the class number. 
The distribution of class numbers follows from a study of the distribution of $L(1,\chi_d)$ over $d$ in the family using a random model $L(1,\mathbb{X})$, where $\mathbb{X}$ is a random variable which models the behaviour of $\chi_d$.

We show that these families belong to a larger class of families whose fundamental units are likewise as small as possible, and therefore for which the same distribution results in \cite{DL} hold. The idea is that, if for example a discriminant $d$ is 1 modulo 4, then we have a repeated continued fraction expansion of the form
\[
	\omega_d := \frac{1+\sqrt{d}}{2} = [u_0, \overline{u_1, u_2, \dots, u_{s-1}, 2u_0-1}],
\]
where the list of coefficients $u_1, \dots, u_{s-1}$ is symmetric. We now have $ \varepsilon_d = \alpha_{s-1}$, where $\alpha_n$ is a sequence corresponding to the convergents of this continued fraction (see \S \ref{sec:continued fractions}), and by Lemma \ref{lem:estimate epsilon}, we get a good estimate for $\varepsilon_d$.

Yokoi's family, which is the set of squarefree discriminants of the form $m^2+4$, fits into this framework in the following way: For $d=m^2+4$, we must have $d$ odd, and if we hence recast the family as discriminants of the form $d=(2n+1)^2+4$, then we see that
\[
	\omega_{d(n)}=[n+1;\overline{2n+1}].
\]
The idea is to generalize this by considering discriminants $d$ with
\[
	\omega_{d} = [f(n), \overline{u_1, u_2, \dots, u_{s-1}, 2f(n)-1}]
\]
for some linear polynomial $f(n)$. It turns out that in this case, solutions for $d$ are given by values of a quadratic polynomial $D(n)$ and yield a ``continued fraction family" of real quadratic fields, in which we can study the distribution of class numbers.

\

Such families are a well-studied subject. The conditions on the coefficients $u_1, \dots, u_{s-1}$ under which there are infinitely many squarefree $d$ were obtained by Friesen \cite{Fr} and Halter-Koch \cite{HK} (and others, see the beginning of \S \ref{sec:continued fractions} for more information).

As we already mentioned, in these families we have that $\varepsilon_{D(n)}\asymp \sqrt{D(n)}$. 
This was already used in a number of previous works, for example Kawamoto and Tomita \cite{KT} show that if the class number of $\mathbb Q(\sqrt {D(n)})$ is one, then $n$ has to be the smallest possible, up to at most 52 exceptions. The class number one problem in special families of real quadratic fields was
considered and solved in various cases in the works of Louboutin \cite{Lo}, Bir\' o, Granville, and Lapkova \cite{Bi}, \cite{BG}, \cite{BL}, \cite{La}
(also see Mollin \cite{Mo} for a survey of earlier results). Such families were also recently considered by Blomer and Kala in the context of universal quadratic forms (and indecomposable integers) over real quadratic number fields \cite{BK}, \cite{Ka}, \cite{Ka2}.

Conversely, in Proposition \ref{prop:epsilon bound implies contfrac} we show that for a polynomial $D(n)$, the assumption that the fundamental unit is bounded by $\sqrt{D(n)}$ essentially implies that $D(n)$ comes from a continued fraction. Related questions focusing on the length of the period of the continued fraction expansion of $\sqrt{D(n)}$ for a polynomial $D(n)$ have been studied by a number of people, from Schinzel \cite{S1}, \cite{S2} to van der Poorten and Williams \cite{vPW}.

There are also constructions of more general families of continued fractions and estimates of the growth of the fundamental unit in them -- let us just mention McLaughlin \cite{McL} and refer the reader there for further references.

\

As we already indicated, we extend the results from \cite{DL} to families arising from continued fractions with (symmetric) constant coefficients $u_1, \dots, u_{s-1}$, which characterize the discriminants by a corresponding quadratic polynomial $D(n)$.
It turns out that the construction for the random model for $L(1,\mathbb{X})$ can be generalized to a random model $L(1,\mathbb{X}_D)$ (cf. \S \ref{sec:random-model}), because its defining feature is the behaviour of the Jacobsthal sum
\[
	\sum_{n=1}^{p} \left( \frac{D(n)}{p} \right) = -1.
\]

We consider both the cases of continued fraction for $\sqrt{d}$ and $(1+\sqrt d)/2$ (i.e., $d\equiv 2, 3\pmod 4$ and $d\equiv 1\pmod 4$, respectively), and obtain the following main results, which  resemble those in \cite{DL}.

In Definition \ref{def:family}, we define a polynomial of {\it continued fraction discriminant type} to be one that arises from a continued fraction expansion.
For such a polynomial, let us now denote the family of continued fraction discriminants arising from $D$ by
\begin{equation}
\label{eq:D_D def}
	\mathcal{D}_D=\lbrace D(n) \mid n\in \mathbb{N}, D(n) \text{ squarefree}\rbrace,
\end{equation}
and define
\[
	\mathcal{D}_D(x)=\lbrace d \in \mathcal{D}_D, d \leq x \rbrace.
\]

Of interest is the maximum and minimum values that $h(d)$ can take in terms of $d$. Using bounds for $L(1,\chi_d)$ obtained by Littlewood \cite{Li} on GRH and the fact that for a positive discriminant $d$ we have $\varepsilon_d \geq \sqrt{d}/2$, we obtain the bounds
\begin{equation}
\label{eq:h-bounds}
	(e^{-\gamma}\zeta(2)+o(1))\frac{\sqrt{d}}{\log d \log \log d}
	\leq
	h(d)
	\leq
	(4e^\gamma+o(1))\frac{\sqrt{d}}{\log d}\log \log d.
\end{equation}
However, the conjectured bounds
\begin{equation}
\label{eq:L-bounds-conj}
	(e^{-\gamma} \zeta(2)+o(1))/\log \log \vert d \vert
	\leq
	L(1,\chi_d)
	\leq
	(e^\gamma+o(1)) \log \log \vert d \vert
\end{equation}
(cf. \cite{GS}) suggest that in fact (\ref{eq:h-bounds}) holds with a lower bound twice as large and an upper bound half as large. It was shown in \cite{Lam1} using Chowla's family that there are at least $x^{1/2-1/\log \log x}$ real quadratic fields with discriminant $d\leq x$ such that the upper bound in (\ref{eq:L-bounds-conj}) is exceeded, and it was also shown that there are no more than $x^{1/2+o(1)}$ such $d$. Further supporting the conjectured bounds is the following theorem, generalized from the case of Chowla's family in \cite{DL}, which states that the tail of large and small values of $h(d)$ over $d\in \mathcal{D}_D$ is double exponentially decreasing.

\begin{theorem}
\label{thm:MainResult}
Let $D$ be a polynomial of continued fraction discriminant type. Let $x$ be large, and $1\leq \tau\leq \log_2 x-3\log_3x$. The number of discriminants $d\in \mathcal{D}_D(x)$ such that 
$$ h(d)\geq 2e^{\gamma}\frac{\sqrt{d}}{\log d} \cdot \tau,$$
equals 
\begin{equation}\label{eq:TailDistributionClass} |\D| \cdot  \exp\left(-\frac{e^{\tau-C_0}}{\tau}\left(1+ O\left(\frac{1}{\tau}\right)\right)\right),
\end{equation}
where 
\begin{equation}\label{eq:SpecialConstant}
C_0:= \int_0^1\frac{\tanh(t)}{t}dt + \int_1^{\infty}\frac{\tanh(t)-1}{t}dt=0.8187\cdots.
\end{equation}
 Moreover, the same estimate holds for the number of discriminants $d\in \D$ such that 
$$ h(d)\leq 2e^{-\gamma}\zeta(2)\frac{\sqrt{d}}{\log d}\cdot \frac{1}{\tau},$$
in the same range of $\tau$. 
\end{theorem}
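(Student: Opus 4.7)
The plan is to mimic the approach of Dahl--Lamzouri \cite{DL} for Chowla's family, replacing the specific polynomial $4n^2+1$ throughout by a general continued fraction discriminant type polynomial $D(n)$. The core reduction is to pass from the distribution of $h(d)$ on $\mathcal{D}_D(x)$ to the distribution of $L(1,\chi_d)$ on the same family, and then to compare the latter with the random model $L(1,\mathbb{X}_D)$ introduced in \S\ref{sec:random-model}.

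First, I would convert the statement about $h(d)$ into a statement about $L(1,\chi_d)$. Lemma \ref{lem:estimate epsilon} together with the formula $\varepsilon_{D(n)}=\alpha_{s-1}$ (with $\alpha_n$ the convergents attached to the fixed symmetric block $u_1,\dots,u_{s-1}$) gives an asymptotic of the form
\[
\log \varepsilon_{D(n)} = \tfrac{1}{2}\log D(n) + O(1),
\]
uniformly for $D(n)\le x$. Feeding this into the class number formula \eqref{eq:dirichlet-class-number-formula} yields $h(d) = (2+o(1))L(1,\chi_d)\sqrt d/\log d$ on the family, so the events $h(d)\ge 2e^\gamma\sqrt d/(\log d)\,\tau$ and $L(1,\chi_d)\ge e^\gamma\tau(1+o(1))$ differ only by a negligible shift that is absorbed into the $O(1/\tau)$ error in \eqref{eq:TailDistributionClass}; the lower tail is handled symmetrically.

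Second, I would establish the moment matching between $L(1,\chi_d)$ and $L(1,\mathbb{X}_D)$. One decomposes $\log L(1,\chi_d)=\sum_{p\le y}(\chi_d(p)/p)+\text{small tail}$ for a suitable $y=y(x,\tau)$ (truncated Euler product), and for each fixed $k$ computes
\[
\frac{1}{|\mathcal{D}_D(x)|}\sum_{d\in\mathcal{D}_D(x)} \Bigl(\sum_{p\le y}\tfrac{\chi_d(p)}{p}\Bigr)^{\!k}
\;=\;\ex\Bigl[\Bigl(\sum_{p\le y}\tfrac{\mathbb{X}_D(p)}{p}\Bigr)^{\!k}\Bigr]
+ \text{error}.
\]
The main input here is the Jacobsthal-type identity $\sum_{n=1}^p\bigl(\tfrac{D(n)}{p}\bigr)=-1$ for $p\nmid\mathrm{disc}(D)$, which exactly reproduces the distribution of $\mathbb{X}_D(p)$ on residues modulo $p$ (with its $1/p$-bias); summing over $n\le x$ and using the polynomial structure replaces the counting argument used in \cite{DL} for $4n^2+1$. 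A standard Rankin-type argument then extends the matching to $k$ as large as $(\log x)/(\log y\cdot\log\log x)$ or so, which suffices for the target range $\tau\le \log_2 x-3\log_3 x$.

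Third, I would compute the tail of the random model. Writing $L(1,\mathbb{X}_D)=\prod_p(1-\mathbb{X}_D(p)/p)^{-1}$ with independent local factors, the Laplace transform $\ex[\exp(s\log L(1,\mathbb{X}_D))]$ factors as an Euler product; a saddle point analysis of this product (following \cite{GS} and \cite{DL}) yields the double exponential shape $\exp(-e^{\tau-C_0}/\tau(1+O(1/\tau)))$, with the constant $C_0$ of \eqref{eq:SpecialConstant} arising exactly as in \cite{GS} because the dependence on $D$ enters only through lower order terms at finitely many primes dividing $\mathrm{disc}(D)$ (which are absorbed into $O(1/\tau)$). Combining this with the moment comparison gives the upper tail; the lower tail for $L(1,\chi_d)\le e^{-\gamma}\zeta(2)/\tau$ is identical after replacing $\mathbb{X}_D(p)$ by its ``$-$'' sign contribution in the Laplace computation.

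The main obstacle is the second step: verifying that the fixed polynomial $D(n)$ with its specific arithmetic (primes dividing leading coefficient, ramified primes, and the squarefree condition defining $\mathcal{D}_D$) does not introduce biases that corrupt the constant $C_0$. Concretely, one must show that imposing squarefreeness of $D(n)$ and handling the finitely many bad primes only affects the normalization $|\mathcal{D}_D(x)|$ and contributes an $O(1)$ additive shift that is still within the $O(1/\tau)$ slack; this is where the most careful bookkeeping beyond the arguments of \cite{DL} is required.
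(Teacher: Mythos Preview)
Your proposal is essentially correct and follows the same overall strategy as the paper: reduce from $h(d)$ to $L(1,\chi_d)$ via the explicit fundamental unit (Proposition~\ref{prop:continued fraction discriminants} gives $\varepsilon_d=p(n)+q\sqrt{D(n)}$ directly, so $\log\varepsilon_d=\tfrac12\log d+O(1)$; you cite Lemma~\ref{lem:estimate epsilon} instead, which also works), then compare with the random model and compute the tail of $L(1,\mathbb X_D)$.

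One organizational difference worth noting: the paper packages your second and third steps into separate theorems (Theorems~\ref{thm:Distribution}, \ref{thm:ExponentialDecay}, \ref{thm:ComplexMoments}) and deduces Theorem~\ref{thm:MainResult} from the first two in a few lines. In particular, the moment comparison in the paper is done via complex moments $L(1,\chi_d)^z$ (through an approximate functional equation and a zero-free region, as in \cite{DL}), not via real moments of the truncated Dirichlet series $\sum_{p\le y}\chi_d(p)/p$ as you sketch. Both routes are viable, but the complex-moments route is what \cite{DL} actually uses and what carries over verbatim once Proposition~\ref{prop:average of character-ex} (the average of $\chi_d(m)$ over the family) is in place; your ``main obstacle'' about bad primes and the squarefree sieve is handled there and in the definition of $\mathbb X_D(p)$ (\S\ref{sec:random-model}), not inside the proof of Theorem~\ref{thm:MainResult} itself.
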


Let $D$ be a polynomial of continued fraction discriminant type. For $\tau>0$, define 
$$ \Phi_{\X}(\tau):= \mathbb{P}\big(L(1,\X)>e^{\gamma}\tau\big) \text{ and } \Psi_{\X}(\tau):= \mathbb{P}\left(L(1,\X)<\frac{\zeta(2)}{e^{\gamma}\tau}\right).
$$
The next theorem states that the distribution of $L(1,\mathbb{X}_D)$ closely approximates that of $L(1,\chi_d)$ for $d\in\mathcal{D}_D$.

\begin{theorem}
\label{thm:Distribution}
Let $D$ be a polynomial of continued fraction discriminant type, and let $x$ be large. Uniformly in the range $1\leq \tau\leq \log_2 x-2\log_3x-\log_4 x$, we have 
$$\frac{1}{|\D|}\big|\{d\in \D: L(1,\chi_d) >e^{\gamma}\tau \}\big|= \Phi_{\X}(\tau)\left(1+O\left(\frac{e^{\tau}(\log_2 x)^2\log _3 x}{\log x}\right)\right),$$
and 
$$ \frac{1}{|\D|}\left|\left\{d\in \D: L(1,\chi_d) <\frac{\zeta(2)}{e^{\gamma}\tau} \right\}\right|= \Psi_{\X}(\tau)\left(1+O\left(\frac{e^{\tau}(\log_2 x)^2\log _3 x}{\log x}\right)\right).$$
\end{theorem}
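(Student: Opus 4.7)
The plan is to follow the moment method of Dahl–Lamzouri \cite{DL}, whose Chowla-family input is replaced by the more general family $\D$. The only family-specific ingredient needed is the Jacobsthal identity
$$\sum_{n=1}^{p}\left(\frac{D(n)}{p}\right)=-1,$$
which forces the moments of $L(1,\chi_d)$ over $d\in\D$ to match those of $L(1,\X)$. First I would approximate $L(1,\chi_d)$ by the short Euler product $L(1,\chi_d;y):=\prod_{p\le y}(1-\chi_d(p)/p)^{-1}$ with $y=(\log x)^{B}$ for a suitable absolute constant $B$. Following \cite{DL}, for all but a negligible proportion of $d\in\D$ one has $\log L(1,\chi_d)=\log L(1,\chi_d;y)+o(1)$, and an analogous tail estimate holds in the random model for $L(1,\X)$; this reduces the theorem to comparing the distributions of the two truncated products.

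Next I would match moments. For complex $s$ of modest modulus, expand the Euler product in
$$\mathcal{M}_x(s):=\frac{1}{|\D|}\sum_{d\in\D}L(1,\chi_d;y)^s$$
and average term by term. The Legendre symbol $\left(\frac{D(n)}{p}\right)$ depends only on $n\pmod p$, with the squarefree condition handled by a standard M\"obius sieve, so the local $p$-factor reduces to
$$\frac{1}{p}\sum_{n=1}^{p}\left(1-\frac{1}{p}\left(\frac{D(n)}{p}\right)\right)^{-s},$$
which is precisely the $p$-factor of $\mathbb{E}[L(1,\X;y)^s]$, since $\X(p)$ is constructed in \S\ref{sec:random-model} to mirror the empirical distribution of $\left(\frac{D(n)}{p}\right)$, whose mean is $-1/p$ by the Jacobsthal identity. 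Summing the off-diagonal errors over squarefree moduli up to $\prod_{p\le y}p=e^{(1+o(1))y}$ then yields $\mathcal{M}_x(s)=\mathbb{E}[L(1,\X;y)^s]\bigl(1+O(x^{-\delta})\bigr)$ uniformly in the required range of $s$.

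Finally I would invert by the saddle-point / Laplace method, choosing $s=s(\tau)$ optimally to read off the distribution from the matched moments. The asymptotics of $\Phi_{\X}(\tau)$ and $\Psi_{\X}(\tau)$ required to produce the claimed error term come from a separate analysis of the random model paralleling \cite{DL}, which also yields the constant $C_0$ of Theorem~\ref{thm:MainResult}. The main obstacle is calibrating $y$: the family $\D$ has size only $\asymp\sqrt x$, forcing $y$ to be polylogarithmic, while $y$ must simultaneously be large enough that the short Euler product approximation remains accurate for $\tau$ as large as $\log_2 x-2\log_3 x-\log_4 x$. Balancing these two regimes exactly as in \cite{DL}, and verifying that only the Jacobsthal identity together with elementary bounds for incomplete character sums are needed on the family side, is the technical heart of the argument.
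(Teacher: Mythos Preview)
Your overall architecture—match complex moments of $L(1,\chi_d)$ over $\D$ to those of the random model, then recover the distribution by a smooth Perron/saddle-point inversion—is exactly what the paper does, deferring to \cite{DL} for the analytic details. Where you diverge is in how the moment match is obtained. The paper (following \cite{DL}) does \emph{not} truncate to a short Euler product $\prod_{p\le y}(1-\chi_d(p)/p)^{-1}$ with $y=(\log x)^B$; instead it expands $L(1,\chi_d)^z$ as the Dirichlet series $\sum_n d_z(n)\chi_d(n)/n$, inserts a smooth cutoff $e^{-n/y}$ with $y$ of conductor size, and uses a zero-free rectangle (valid for the non-exceptional $d$ in the starred sum of Theorem~\ref{thm:ComplexMoments}) to control the tail. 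The surviving finite sum is then averaged over $\D$ via Proposition~\ref{prop:average of character-ex}. This sidesteps precisely the calibration problem you single out at the end: there is no Euler-product parameter to balance against the thinness $|\D|\asymp\sqrt x$ of the family, only the range $|z|\ll\log x/(\log_2 x\,\log_3 x)$ in Theorem~\ref{thm:ComplexMoments}, which is forced directly by the error term $O(m^{2/3}x^{-1/6}\log x)$ of Proposition~\ref{prop:average of character-ex}. Your short-Euler-product route is a legitimate alternative in the Granville--Soundararajan style, but carrying it out would require an independent ``most $d\in\D$ have a good zero-free region'' input rather than something already in \cite{DL}.

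Two smaller corrections on the arithmetic side. The Jacobsthal sum is $-1$ only for $p\nmid a$ (Corollary~\ref{cor:character sum}); the finitely many primes dividing $a$, and the prime $2$, require separate local factors, which is why the random model in \S\ref{sec:random-model} carries the extra data $K(\ell)$ at $2$ and special cases at $p\mid a$. Also, the local $p$-factor of $\ex(L(1,\X)^z)$ is not $\frac{1}{p}\sum_{n=1}^{p}\bigl(1-\tfrac{1}{p}(\tfrac{D(n)}{p})\bigr)^{-s}$: the squarefree sieve conditions on $p^2\nmid D(n)$, so $\alpha_p,\beta_p,\gamma_p$ are defined via residues modulo $p^2$ (through $c(p^2)$), not modulo $p$. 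Lemmas~\ref{lem:Ex as product} and~\ref{lem:charsum decomposition} make this correspondence exact and are what actually feed into Proposition~\ref{prop:average of character-ex}.
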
 
Here and throughout the paper, we denote by $\log_k x$ the $k$-fold iterated logarithm; i.e., $\log_2 = \log \log $, etc. 

\

Finally, in order to deduce Theorem \ref{thm:MainResult}, we need to determine what the behaviour of the distribution of $L(1,\mathbb{X}_D)$ is. This is the content of the following theorem.
\begin{theorem}
\label{thm:ExponentialDecay}
Let $D$ be a polynomial of continued fraction discriminant type. For large $\tau$ we have  
\begin{equation}\label{eq:AsympLargeDeviations}
\Phi_{\X}(\tau)=\exp\left(-\frac{e^{\tau-C_0}}{\tau}\left(1+ O\left(\frac{1}{\tau}\right)\right)\right),
\end{equation}
where $C_0$ is defined in \eqref{eq:SpecialConstant}. 
The same estimate also holds for $\Psi_{\X}(\tau)$. Moreover, if $0\leq \lambda\leq e^{-\tau}$, then we have 
\begin{equation}\label{eq:Perturbations}
\Phi_{\X}\left(e^{-\lambda} \tau\right)=\Phi_{\X}(\tau) \big(1+O\left(\lambda e^{\tau}\right)\big), \textup{ and } \Psi_{\X}\left(e^{-\lambda} \tau\right)=\Psi_{\X}(\tau) \big(1+O\left(\lambda e^{\tau}\right)\big).
\end{equation}

\end{theorem}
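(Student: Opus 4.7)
The plan is to follow the saddle-point framework of Granville--Soundararajan \cite{GS}, as adapted by Dahl--Lamzouri \cite{DL} to Chowla's family, and to verify that the distribution of $\X(p)$ forced by the Jacobsthal identity $\sum_{n=1}^{p}\bigl(\tfrac{D(n)}{p}\bigr)=-1$ yields exactly the same asymptotic constant $C_0$. The first step is to record, from the construction in Section~\ref{sec:random-model}, that the $\X(p)$ are independent across $p$ and take values in $\{-1,0,+1\}$ with $\mathbb{P}(\X(p)=\pm 1)=\tfrac{1}{2}+O(1/p)$, where the $O(1/p)$ bias is precisely the footprint of the Jacobsthal sum equalling $-1$.

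The core computation is then the Laplace transform
\[
\log \ex\bigl[L(1,\X)^s\bigr] = \sum_p \log \ex\bigl[(1-\X(p)/p)^{-s}\bigr].
\]
Using the explicit distribution, each local factor is expanded in powers of $1/p$; this yields a principal part identical to the one analyzed in \cite{GS,DL}, plus a correction of the form $\sum_p O(s/p^2)$ coming from the $O(1/p)$ bias in $\mathbb{P}(\X(p)=\pm 1)$. The essential check is that this correction remains bounded in the range of $s$ relevant to the saddle point, so it only affects implicit constants. Inversion by the Granville--Soundararajan saddle-point method then produces a saddle $s(\tau)$ solving $\partial_s \log \ex[L(1,\X)^s]=\tau+\gamma$ with $s(\tau)=e^{\tau-C_0}/\tau\cdot(1+O(1/\tau))$; the specific $C_0$ of \eqref{eq:SpecialConstant} emerges from the asymptotic evaluation of the individual local Laplace factors. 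Substituting back yields \eqref{eq:AsympLargeDeviations}, and the analogous statement for $\Psi_{\X}(\tau)$ follows from the symmetry of the computation under $s\mapsto -s$ (up to the aforementioned $O(1/p)$ bias, which again sits inside the implicit constants).

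The perturbation identity \eqref{eq:Perturbations} is then a soft consequence of \eqref{eq:AsympLargeDeviations}: since $\log \Phi_{\X}(\tau)=-e^{\tau-C_0}/\tau\cdot(1+O(1/\tau))$ is smooth with derivative of size $\asymp e^\tau$, replacing $\tau$ by $e^{-\lambda}\tau=\tau-\lambda\tau+O(\lambda^2\tau)$ with $\lambda\le e^{-\tau}$ changes $\log \Phi_{\X}$ by $O(\lambda e^\tau)$, which exponentiates to the claimed multiplicative factor $1+O(\lambda e^\tau)$; the same computation handles $\Psi_{\X}$.

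The main obstacle I foresee lies in the quantitative check in the second paragraph: showing that the $O(1/p)$ bias in the distribution of $\X(p)$---the only feature distinguishing the continued-fraction random model from the plain random $\pm 1$ model of \cite{GS}---contributes only to the implicit constants and not to the leading $e^{\tau-C_0}/\tau$ term in the exponent. This is the point at which the hypothesis that $D$ is of continued fraction discriminant type enters decisively, since it is used only through the Jacobsthal identity $\sum_{n=1}^p \bigl(\tfrac{D(n)}{p}\bigr) = -1$.
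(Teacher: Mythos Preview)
Your outline for the main asymptotic \eqref{eq:AsympLargeDeviations} is essentially the paper's argument: both reduce to the saddle-point analysis of \cite{GS,DL} after checking that the local factors $\ex[(1-\X(p)/p)^{-s}]$ differ from the symmetric $\pm 1$ model only by the $O(1/p)$ bias forced by the Jacobsthal sum, and that this bias is absorbed into the implicit constants. That part is fine.

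The gap is in your derivation of the perturbation estimate \eqref{eq:Perturbations}. You claim it is a ``soft consequence'' of \eqref{eq:AsympLargeDeviations}, but \eqref{eq:AsympLargeDeviations} only says
\[
\log \Phi_{\X}(\tau) = -\frac{e^{\tau-C_0}}{\tau} + O\!\left(\frac{e^{\tau}}{\tau^2}\right),
\]
and the error term here is \emph{not} small enough. With $\tau'=e^{-\lambda}\tau$ and $\tau-\tau'\asymp\lambda\tau$, the difference of the leading terms is indeed $O(\lambda e^{\tau})$, but the two $O(e^{\tau}/\tau^2)$ errors do not cancel: all you can say about their difference is that it is $O(e^{\tau}/\tau^2)$, which swamps $\lambda e^{\tau}$ whenever $\lambda<c/\tau^2$ (and $\lambda$ is allowed to range all the way down to $0$). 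Knowing an asymptotic for a function with a multiplicative $(1+O(1/\tau))$ error gives you no control over its derivative, so the sentence ``is smooth with derivative of size $\asymp e^{\tau}$'' is not justified by \eqref{eq:AsympLargeDeviations} alone.

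What the paper (following \cite{DL}) actually does is keep the \emph{sharper} saddle-point identity
\[
\Phi_{\X}(\tau)=\frac{\ex\!\left(L(1,\X)^{\kappa}\right)(e^{\gamma}\tau)^{-\kappa}}{\kappa\sqrt{2\pi\,\mathcal{L}''(\kappa)}}\left(1+O\!\left(\sqrt{\tfrac{\log\kappa}{\kappa}}\right)\right),
\]
where $\kappa=\kappa(\tau)$ is the saddle solving $\mathcal{L}'(\kappa)=\gamma+\log\tau$, together with the estimates of Proposition~4.2 of \cite{DL} for $\mathcal{L}$ and its derivatives. From this formula one can compare $\Phi_{\X}(e^{-\lambda}\tau)$ and $\Phi_{\X}(\tau)$ directly through the dependence $\kappa\mapsto\kappa(\tau)$, and the perturbation \eqref{eq:Perturbations} falls out. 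So the fix is not to differentiate the final asymptotic, but to go back one step in the saddle-point argument and work with the exact expression in $\kappa$.
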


As in \cite{DL}, we use these distribution results to examine the number of discriminants in the family $\mathcal{D}_D$ with class number $h$, denoted by $\mathcal{F}_D(h)$. The difference is in  the constant which now depends on the choice of polynomial $D$.

\begin{theorem}
\label{thm:F-average}
Let $D$ be a polynomial of continued fraction discriminant type. As $H \rightarrow \infty$, we have
\[
	\sum_{h\leq H} \F = C_2 H \log H 
	+ O\left(H(\log_2 H)^2\log_3 H\right),
\]
where
\[
	C_2=\frac{1}{\sqrt{a}} \left( 1-\frac{c(4)}{4}\right) \prod_{p>2} \left( 1-\frac{c(p)}{p^2} \right) \ex(L(1,\X)^{-1}),
\]
with
\[
	c(n) :=\#\lbrace k \mod n \mid D(k) \equiv 0 \pmod n.
\]
\end{theorem}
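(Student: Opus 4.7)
The plan is to follow the strategy of \cite{DL} used for Chowla's family. Since $h(d)\geq 1$ for all $d$, the sum $\sum_{h\leq H}\F$ counts discriminants $d\in\mathcal{D}_D$ with $h(d)\leq H$, and Dirichlet's formula \eqref{eq:dirichlet-class-number-formula} converts this into a constraint on $L(1,\chi_d)$:
\[
    \sum_{h\leq H}\F = \bigl|\{d\in\mathcal{D}_D : L(1,\chi_d)\leq H\log\varepsilon_d/\sqrt d\}\bigr|.
\]
Parametrising $d=D(n)$ and invoking Lemma \ref{lem:estimate epsilon} (which yields $\log\varepsilon_{D(n)} = \tfrac12\log D(n)+O(1)$) together with $\sqrt{D(n)} = \sqrt a\,n + O(1)$, the lower bound in \eqref{eq:h-bounds} forces the surviving $n$ to satisfy $n\leq N_0$ with $N_0 = O(H\log H\log_2 H)$. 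Hence
\[
    \sum_{h\leq H}\F
    = \sum_{n\leq N_0}\mu^2(D(n))\,\mathbf 1\!\left[L(1,\chi_{D(n)})\leq v(n)\right],
    \qquad v(n):=\frac{H\log\varepsilon_{D(n)}}{\sqrt{D(n)}}.
\]

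Next, I would detect the squarefree condition by a Möbius sieve $\mu^2(D(n)) = \sum_{k^2\mid D(n)}\mu(k)$, truncated at a parameter $z=(\log H)^A$ for a large constant $A$. Standard polynomial-value sieve estimates produce the local density
\[
    \Bigl(1-\frac{c(4)}4\Bigr)\prod_{p>2}\Bigl(1-\frac{c(p)}{p^2}\Bigr),
\]
with an acceptable tail error from $k\geq z$. In parallel, for each $n$ in the transitional regime I would apply Theorem \ref{thm:Distribution} to replace $\mathbf 1[L(1,\chi_{D(n)})\leq v(n)]$ by the model probability $\mathbb{P}(L(1,\X)\leq v(n))$, at a relative cost of $(\log_2 x)^2\log_3 x/\log x$. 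The contribution from the extreme tails (very small or very large $v(n)$) is handled by the double-exponential decay of $\Phi_{\X}$ and $\Psi_{\X}$ in Theorem \ref{thm:ExponentialDecay}.

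After these reductions the main term reads
\[
    \Bigl(1-\tfrac{c(4)}4\Bigr)\prod_{p>2}\Bigl(1-\tfrac{c(p)}{p^2}\Bigr)\cdot\ex\bigl[\,\#\{n\leq N_0 : v(n)\geq L(1,\X)\}\,\bigr].
\]
For a fixed realisation $L$ of $L(1,\X)$, the set $\{n: v(n)\geq L\}$ is an initial segment $n\leq n^*(L)$; inverting the asymptotic relation $v(n)\sim H\log n/(\sqrt a\,n)$ gives $n^*(L) = (H\log H)/(\sqrt a\,L)\bigl(1+O(\log_3 H/\log_2 H)\bigr)$. Taking the expectation over $L=L(1,\X)$, and using that $\ex(L(1,\X)^{-1})$ is finite thanks to the decay of $\Psi_{\X}$ in Theorem \ref{thm:ExponentialDecay}, one arrives at the main term $C_2 H\log H$.

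The principal technical hurdle is the error bookkeeping: each application of Theorem \ref{thm:Distribution} introduces a relative error $(\log_2 x)^2\log_3 x/\log x$, which after summation over the $\asymp H\log H$ transitional values of $n$ produces exactly the claimed error term $O(H(\log_2 H)^2\log_3 H)$. The most delicate step is to coordinate the truncation parameter of the squarefree sieve with the distributional tail bounds from Theorem \ref{thm:ExponentialDecay}, so that the sieve remainder and the $L$-value error do not blow up; this follows the template of the corresponding argument in \cite{DL}.
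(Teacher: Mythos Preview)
Your route is genuinely different from the paper's, and it has a real gap.

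\textbf{What the paper actually does.} The paper never invokes Theorem~\ref{thm:Distribution} here; it works directly with complex moments via Theorem~\ref{thm:ComplexMoments}. The condition $h(d)\leq H$ is encoded by a smoothed Perron kernel $I_{c,\lambda,N}(H/h(d))$; writing $h(d)^{-s}=\ell(d)^{-s}L(1,\chi_d)^{-s}$ with $\ell(d)=\sqrt d/\log\varepsilon_d$, a partial summation in $d$ together with Theorem~\ref{thm:ComplexMoments} reduces the inner sum to $\mathbb E\bigl(L(1,\X)^{-s}\bigr)\int_1^X y'(x)\,\ell(x)^{-s}\,dx$, and the contour integral is then evaluated. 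The initial truncation to $d\leq X:=H^2(\log H)^8$ is unconditional, via Tatuzawa's refinement of Siegel's theorem; your use of the GRH-conditional lower bound in \eqref{eq:h-bounds} is not legitimate here.

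\textbf{Where your proposal breaks.} Theorem~\ref{thm:Distribution} is a statement about the \emph{proportion} of $d\in\mathcal D_D(x)$ whose $L$-value lies beyond a \emph{fixed} threshold; it cannot be applied ``for each $n$'' to replace the single indicator $\mathbf 1[L(1,\chi_{D(n)})\leq v(n)]$ by $\mathbb P(L(1,\X)\leq v(n))$. To make that substitution rigorous you would have to bucket $n$ into ranges on which $v(n)$ is nearly constant and then difference the theorem across endpoints. Two obstacles arise. First, Theorem~\ref{thm:Distribution} is stated only for thresholds $e^\gamma\tau$ or $\zeta(2)e^{-\gamma}/\tau$ with $\tau\geq 1$, so it says nothing about the central range $v(n)\asymp 1$---and that is exactly where the bulk of the main term lives (the set of $n$ with $v(n)$ in any fixed bounded interval has length $\asymp H\log H$). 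Second, the error in Theorem~\ref{thm:Distribution} is multiplicative in $|\mathcal D_D(x)|$ rather than in the increment, so differencing gains nothing and the errors over the buckets accumulate beyond $O(H(\log_2 H)^2\log_3 H)$. Note also that your separate M\"obius sieve for squarefreeness is redundant: Theorem~\ref{thm:Distribution} already averages over the squarefree family $\mathcal D_D(x)$, and the local density $\bigl(1-c(4)/4\bigr)\prod_{p>2}(1-c(p)/p^2)$ enters through Proposition~\ref{prop:average of character-ex}, not through an independent sieve. The complex-moment approach sidesteps all of this because $h(d)^{-s}$ factors the $d$- and $L$-dependence multiplicatively, and the Perron inversion treats every size of $h(d)$ at once.
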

We direct the reader to \S \ref{sec:random-model} for more details on $c(n)$, particularly Lemma \ref{lem:c(p)-identity}. To prove this theorem, we shall need the following result on complex moments of $L(1,\chi_d)$ for $d$ in a family.

\begin{theorem}
\label{thm:ComplexMoments}
Let $D$ be a polynomial of continued fraction discriminant type, and let $x$ be large. There exists a positive constant $B$ such that uniformly for all complex numbers $z$ with $|z|\leq B\log x/(\log_2x\log_3 x)$ we have
$$\frac{1}{|\D|}\sumst_{d\in \D} L(1,\chi_d)^z= \ex\left(L(1,\X)^z\right)+
O\left(\exp\left(-\frac{\log x}{20\log_2x}\right)\right),
$$
where $\sumst$ indicates that the sum is over non-exceptional discriminants $d$.
\end{theorem}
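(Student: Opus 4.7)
\textbf{Proof plan for Theorem \ref{thm:ComplexMoments}.}

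The strategy follows Granville--Soundararajan and Dahl--Lamzouri. The plan is to replace both $L(1,\chi_d)^z$ and $L(1,\mathbb X_D)^z$ by short truncated Euler products, expand each as a Dirichlet series in the characters, and use the Jacobsthal-type relation
$\sum_{m\bmod p}\bigl(\tfrac{D(m)}{p}\bigr)=-1$
(together with its higher-moment analogues developed in Section \ref{sec:random-model}) to match the average over $d\in\mathcal D_D(x)$ with the expectation in the random model term by term.

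First I would establish a short Euler product approximation: for a suitable cutoff $y$ (a sufficiently large power of $\log x$ to exploit the zero-free region available for non-exceptional $\chi_d$) and uniformly in the allowed range of $z$, for every non-exceptional $d\le x$,
\[
L(1,\chi_d)^z=\prod_{p\le y}\Bigl(1-\frac{\chi_d(p)}{p}\Bigr)^{-z}\bigl(1+O(\exp(-\log x/(20\log_2 x)))\bigr),
\]
and similarly for $L(1,\mathbb X_D)^z$ against the corresponding truncation. This step removes the tail of the Euler product, using precisely the non-exceptional hypothesis, and is the source of the stated error term.

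Next I would expand the truncated product as a Dirichlet series,
\[
\prod_{p\le y}\Bigl(1-\frac{\chi_d(p)}{p}\Bigr)^{-z}=\sum_{\substack{n\ge1\\ p\mid n\Rightarrow p\le y}}\frac{d_z(n)\chi_d(n)}{n},
\]
and truncate at $N:=x^{1/\log_2 x}$; the $y$-smooth tail $n>N$ is controlled by Rankin's trick using $|d_z(n)|\le d_{|z|}(n)$. For each surviving $n\le N$, the averaged character value
\[
\frac{1}{|\mathcal D_D(x)|}\sumst_{d\in\mathcal D_D(x)}\chi_d(n)
\]
reduces by multiplicativity of $\chi_{D(m)}=\bigl(\tfrac{D(m)}{\cdot}\bigr)$ to Jacobsthal sums $\sum_{m\bmod p}\bigl(\tfrac{D(m)}{p}\bigr)^a$ over complete residue classes, which by the construction of $\mathbb X_D$ in Section~\ref{sec:random-model} are exactly $\mathbb E(\mathbb X_D(p)^a)$ up to boundary errors of size $O(N^{O(1)}/\sqrt x)=x^{-1/2+o(1)}$ coming from incomplete classes at the edge of $[1,\sqrt x]$. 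Reassembling the truncated Euler product on the random-model side then yields the claim.

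The main obstacle is the first step. Since $|z|$ may be as large as $B\log x/(\log_2 x\log_3 x)$ and $L(1,\chi_d)$ can have size $(\log_2 x)^{O(1)}$, raising the multiplicative approximation to the $z$th power inflates its relative error by a factor of order $\exp(|z|\log_2 x)$, which is only barely smaller than the target absolute error. This forces a careful joint calibration of the cutoff $y$, the constant $B$, and the strength of the zero-free region used, and it is precisely here that one exploits the freedom to exclude the thin exceptional set of $d$ recorded by the starred sum.
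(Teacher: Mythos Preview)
Your proposal is correct and follows the same overall architecture as the paper (which in turn defers to \cite{DL}): approximate $L(1,\chi_d)^z$ by a short object using the zero-free region available for non-exceptional $d$, expand as a Dirichlet series with coefficients $d_z(n)$, and match term by term against the random model via Proposition~\ref{prop:average of character-ex}. The one technical difference is in the approximation step. The paper uses a smooth weight $e^{-n/y}$ and the Mellin identity
\[
\frac{1}{2\pi i}\int_{(2)} L(1+s,\chi)^z\,\Gamma(s)\,y^s\,ds \;=\; \sum_{n\ge1}\frac{d_z(n)\chi(n)}{n}\,e^{-n/y},
\]
shifting the contour into the zero-free region to pick up the residue $L(1,\chi)^z$; you instead propose a sharp Euler-product cutoff at $p\le y$ followed by a Rankin truncation of the resulting $y$-smooth Dirichlet series at $N=x^{1/\log_2 x}$. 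Both routes are standard and arrive at the same place: the smoothed version packages the tail and the zero-free input into a single contour shift, while your version is a touch more elementary but requires a separate smooth-tail estimate. One minor correction: the boundary error you quote as $O(N^{O(1)}/\sqrt x)$ should be read off from Proposition~\ref{prop:average of character-ex} as $O(m^{2/3}x^{-1/6}\log x)$, but with $m\le N=x^{1/\log_2 x}$ this is still $x^{-1/6+o(1)}$ and perfectly adequate.
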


The definition of an exceptional discriminant is given in (3.2) of \cite{DL}, and we also refer the reader to Remark 1.5 there for details.

\section{Discriminant families from continued fractions}
\label{sec:continued fractions}

It is a classical fact that the continued fraction expansion of $\sqrt d$ (for squarefree positive integer $d$) is of the form
$\sqrt{d}=[u_0, \overline{u_1,\dots,u_{s-1},u_s=2u_0}]$, where the sequence $u_1,\dots,u_{s-1}$ is symmetric. One can conversely ask for which 
symmetric sequences $u_1,\dots,u_{s-1}$ there are infinitely many suitable values of $u_0$ and $d$. The answer was probably already known to Euler in 1765
and is formulated for example in \cite[\S 26]{Pe}. This was later extended by Friesen \cite{Fr} and Halter-Koch \cite{HK} to include the condition on the 
squarefreeness of $d$ and also the case of continued fractions for $(1+\sqrt d)/2$ (for this case, see also \cite[\S 30]{Pe}).

The ``continued fraction families" that we are considering arise from these considerations. However, let us first recall some basic facts about continued fraction convergents (see, e.g., \cite{Pe}):

\

Let $d$ be a squarefree positive integer, $K=\mathbb Q(\sqrt d)$ and define
$$\omega_d=\begin{cases}
\sqrt d &\mbox{if } d\equiv 2,3\\
\frac{1+\sqrt d}2&\mbox{if } d\equiv 1
\end{cases}\pmod 4.$$
Then
$$-\omega_d\pr=\begin{cases}
\sqrt d &\mbox{if } d\equiv 2,3\\
\frac{-1+\sqrt d}2&\mbox{if } d\equiv 1
\end{cases}\pmod 4,$$
and $\mathcal O_K=\mathbb Z[\omega_d]$.

Let $\omega_D=[u_0, \overline{u_1, u_2, \dots, u_{s-1}, u_s}]$ be the periodic continued fraction expression for $\omega_d$. Note that $u_s=2u_0$ when $d\equiv 2,3\pmod 4$ and $u_s=2u_0-1$ when $d\equiv 1\pmod 4$. Also the sequence $(u_1, u_2, \dots, u_{s-1})$ is symmetric.

Let $\frac {p_i}{q_i}:=[u_0, \dots, u_i]$ be the $i$th convergent to $-\omega_d\pr$, 
and $\alpha_i=p_i-q_i\omega_d\pr$ the corresponding element of $\mathcal O_K$. Then
$p_{i+1}=u_{i+1}p_i+p_{i-1}$, $q_{i+1}=u_{i+1}q_i+q_{i-1}$ and 
$\alpha_{i+1}=u_{i+1}\alpha_{i}+\alpha_{i-1}$
(with initial conditions $p_{-1}:=1$, $p_0=k$, $q_{-1}:=0$, $q_0=1$). Note that $\alpha_{-1}=1$. 

Finally, we have $\ve=\alpha_{s-1}$, where $\ve>1$ is the fundamental unit of $K$. 

\

The following proposition establishes the discriminant family parametrization arising from a continued fraction expansion. We mostly follow Halter-Koch \cite{HK} in the statement.

\begin{prop}[\cite{Fr}, \cite{HK}]
\label{prop:continued fraction discriminants}
Let $u_1,\dots,u_{s-1}$ be a symmetric sequence of positive integers (with $s\geq 1$) and define a sequence $q_i$ for $-1\leq i\leq s$ by the recurrence $q_{i+1}=u_{i+1}q_i+q_{i-1}$,
$q_{-1}=0$, $q_0=1$. Then:

a) The equation
\begin{equation}\label{eq:1}
\sqrt{d}=[k, \overline{u_1,\dots,u_{s-1},2k}],\ \ k=\lfloor \sqrt d\rfloor
\end{equation}
has infinitely many squarefree positive solutions $d\equiv 2,3\pmod 4$ if and only if
$q_{s-1}$ is odd or $q_{s-1}$, $q_{s-2}q_{s-3}$ are both even.

b) The equation
\begin{equation}\label{eq:2}
\frac{1+\sqrt{d}}2=[k, \overline{u_1,\dots,u_{s-1},2k-1}],\ \ k=\left\lfloor \frac{1+\sqrt{d}}2\right\rfloor
\end{equation}
has infinitely many squarefree positive solutions $d\equiv 1\pmod 4$ if and only if
$q_{s-1}$ is odd or $q_{s-1}$, $q_{s-2}q_{s-3}+1$ are both even.

When the corresponding condition is satisfied, all the solutions to \eqref{eq:1} or \eqref{eq:2} are given as
$$d=D(n)=an^2+bn+c,\ \ k(n)=en+f$$
for $n\in \mathbb{N}$, where the integers $a$, $b$, $c$, $e$, and $f$ depend on $u_1,\dots,u_{s-1}$, $a\neq 0$ is a square, $e\neq 0$, and $b^2-4ac=(-1)^s$ or $(-1)^s\cdot 4$ or $(-1)^s\cdot 16$.

Also, if $D(n)$ is squarefree, and $D(n)\equiv 2, 3\pmod 4$ in case a) and $D(n)\equiv 1\pmod 4$ in case b), then 
the fundamental unit of the quadratic field $\mathbb Q(\sqrt {D(n)})$ is of the form
\[
	\varepsilon_{D(n)} = p(n)+q\sqrt{D(n)} \mathit{\ \ or\ \ } p(n)+q\cdot\frac{1+\sqrt{D(n)}}2
\]
where $p$ is a linear polynomial, $q=q_{s-1}$ is a constant, and both $p$ and $q$ depend on $u_1,\dots,u_{s-1}$.
\end{prop}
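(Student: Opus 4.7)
The proof translates the periodic continued fraction expansion into a Pell-type equation and exploits the palindromic symmetry of $u_1,\dots,u_{s-1}$. Writing $M(x)=\bigl(\begin{smallmatrix}x&1\\1&0\end{smallmatrix}\bigr)$, the convergents $p_i,q_i$ satisfy $M(k)\cdot \prod_{i=1}^{s-1}M(u_i)=\bigl(\begin{smallmatrix}p_{s-1}&p_{s-2}\\q_{s-1}&q_{s-2}\end{smallmatrix}\bigr)$. Since $u_1,\dots,u_{s-1}$ is a palindrome, the product $\prod_{i=1}^{s-1}M(u_i)$ is itself a symmetric matrix, which forces the identity $p_{s-1}=kq_{s-1}+q_{s-2}$. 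Combining this with the standard Pell relation $p_{s-1}^2-d\,q_{s-1}^2=(-1)^s$ (which, together with $k=\lfloor\sqrt d\rfloor$, is equivalent to the continued fraction expansion having the form \eqref{eq:1}) yields the algebraic constraint
\[
(kq_{s-1}+q_{s-2})^2-(-1)^s = d\,q_{s-1}^2,
\]
determining $d$ as a quadratic function of $k$.

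Since $\det\prod_{i=1}^{s-1}M(u_i)=(-1)^{s-1}$ gives $q_{s-2}^2-(-1)^s\equiv 0\pmod{q_{s-1}}$, the right-hand side above is divisible by $q_{s-1}^2$ precisely when $k$ lies in a single residue class modulo $q_{s-1}/\gcd(2,q_{s-1})$. Parametrizing this arithmetic progression as $k=k(n)=en+f$ and substituting back produces $d=D(n)=an^2+bn+c$ with $a=e^2$ a nonzero square, and a direct calculation gives $b^2-4ac=4e^2(-1)^s/q_{s-1}^2$; the three possible values $(-1)^s$, $(-1)^s\cdot 4$, $(-1)^s\cdot 16$ correspond to the choices $e\in\{q_{s-1}/2,\,q_{s-1},\,2q_{s-1}\}$ forced by parity and residue constraints. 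The ``infinitely many squarefree $d$'' statement then reduces to a finite congruence check: a standard sieve argument shows that any quadratic polynomial takes squarefree values for a positive density of $n$ unless a fixed prime square divides $D(n)$ identically, and combining this with the residue condition $d\equiv 2,3\pmod 4$ produces precisely the parity conditions on $q_{s-1}$ and $q_{s-2}q_{s-3}$ stated in (a). The parallel treatment of \eqref{eq:2}, using the modified norm equation $p_{s-1}^2-p_{s-1}q_{s-1}-\tfrac{d-1}{4}q_{s-1}^2=(-1)^s$, gives (b) with the parity on $q_{s-2}q_{s-3}+1$.

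Finally, the fundamental unit assertion is immediate from the preliminaries: $\ve_{D(n)}=\alpha_{s-1}=p_{s-1}-q_{s-1}\omega_{D(n)}\pr$, and substituting $-\omega_{D(n)}\pr=\sqrt{D(n)}$ (case (a)) or $-\omega_{D(n)}\pr=\omega_{D(n)}-1$ (case (b)), together with $p_{s-1}=k(n)q_{s-1}+q_{s-2}$ which is linear in $n$, produces the claimed forms with $q=q_{s-1}$ a constant depending only on $u_1,\dots,u_{s-1}$. The principal technical obstacle is the case analysis for the parity and squarefreeness conditions in (a) and (b); this is the heart of the Friesen and Halter-Koch treatments, and a self-contained proof would essentially reproduce their bookkeeping.
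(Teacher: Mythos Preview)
Your sketch is correct and in fact more substantive than the paper's own proof, which consists almost entirely of citations: the paper simply says the result is essentially \S 2 and \S 5 of Halter-Koch \cite{HK}, notes that its $q_{s-1},q_{s-2},q_{s-3}$ correspond to Halter-Koch's $e,f,g$, adds a brief remark that the extra congruence condition $d\equiv 2,3\pmod 4$ in part (a) is handled by a parity case analysis on $q_{s-1},q_{s-2},q_{s-3}$, verifies the discriminant formula from the explicit $a,b,c$ in \cite{HK} together with $q_{s-2}^2=q_{s-1}q_{s-3}+(-1)^s$, and cites Perron for the fundamental unit.

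Your outline --- the matrix formulation, the symmetry $p_{s-1}=kq_{s-1}+q_{s-2}$ from the palindrome, the Pell relation yielding $d$ as a quadratic in $k$, and the residue-class parametrisation $k=en+f$ --- is exactly the mechanism underlying the Friesen/Halter-Koch computations, so you are reconstructing what the paper elects to cite. You are also candid that the parity bookkeeping for (a) and (b) would amount to reproducing those references, which matches the paper's stance. One small point: your formula $b^2-4ac=4e^2(-1)^s/q_{s-1}^2$ does give the right trichotomy, but the list of admissible $e$ you give is specific to case (a); in case (b) the modified norm equation shifts the possibilities (this is why the paper just appeals to the explicit values from \cite{HK} rather than deriving the three cases uniformly).
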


\begin{proof}
These are essentially \S 2 and \S 5 in \cite{HK}. Note that $e,f,g$ from that paper correspond to our $q_{s-1}, q_{s-2}, q_{s-3}$.
In \cite{HK}, the admissible values of the variable $n$ are implicitly taken as $n>n_0$ for some $n_0$, which we can modify to $n>0$ by the substitution $n\mapsto n-n_0$. 

The only part that does not appear in \cite{HK} is the additional condition that $d\equiv 2, 3\pmod 4$ in part a). This is not hard to verify by a case-by-case analysis of $d\pmod 4$, depending on the parities of $q_{s-1}, q_{s-2}, q_{s-3}$. Note that due to the recurrence, $q_{s-1}, q_{s-2}$ can't both be even, and likewise $q_{s-2}, q_{s-3}$ can't both be even.

The formula for the discriminant $b^2-4ac$ can be directly verified using the explicit values of $a,b,c$ stated in \cite{HK} and the fact that $q_{s-2}^2=q_{s-1}q_{s-3}+(-1)^s$.

The final statement concerning the fundamental unit is classical and for example appears as Theorems 3.18 and 3.35 in \cite{Pe}.
\end{proof}

For our families of real quadratic fields we wish to consider $\mathbb Q(\sqrt {D(n)})$ for $D(n)$ as above. However, we need to control the behaviour of the fundamental unit in such a family, which depends on whether $D(n)\equiv 1$ or $\equiv 2, 3\pmod 4$. 
Unfortunately, it sometimes happens that in the
case a) of Proposition \ref{prop:continued fraction discriminants}, the polynomial $D(n)$ attains also values congruent to $1\pmod 4$: for example when 
$\sqrt {D(n)}=[n, \overline{2n}]$, then $D(n)=n^2+1\equiv 1, 2\pmod 4$.  But when $D(n)\equiv 1\pmod 4$ one should consider the continued fraction expansion of $\frac{1+\sqrt {D(n)}}2$ instead. 
However, the proposition guarantees the existence of at least one $n_0$ such that $D(n_0)\equiv 2, 3\pmod 4$. Then we have $D(4m+n_0)\equiv D(n_0)\pmod 4$ for all $m$, and so the subfamily $D(4m+n_0)$ attains only the desired values mod 4. 

Somewhat surprisingly, this problem does not occur for families that arise in case b): all the values of $D(n)$ then are $\equiv 0, 1\pmod 4$, as can be easily seen by the explicit formula in \cite{HK}. We can allow values that are congruent to $0\pmod 4$ for now, as they will be sieved out by the squarefree condition in (\ref{eq:D_D def}).

This discussion motivates the following definition:

\begin{definition}\label{def:family}
Let $D(n)$ be a polynomial arising from a continued fraction equation as in Proposition  \ref{prop:continued fraction discriminants} and let $N(n)=un+v$ with $u=2^i$ for $i\geq 0$ and $-u<v\leq 0$.
If $D(n)$ comes from the case a) of Proposition \ref{prop:continued fraction discriminants}, also assume that we have $D(N(n))\equiv 0, 2, 3\pmod 4$ for all $n$. Then we say that $D(N(n))$ is a \textit{polynomial of continued fraction discriminant type}.
\end{definition}

The leading coefficient of every polynomial of continued fraction discriminant type is a square and its discriminant is $\pm$ a power of two, which we will need later. Also, as we discussed before the definition, for every $D(n)$ from Proposition \ref{prop:continued fraction discriminants}, there is some $N(n)$ such that $D(N(n))$ is of continued fraction discriminant type (and it suffices to take $u=4$ in case a) and $u=1$ in case b)).

\

The formulas for $D(n)$ and $\varepsilon_{D(n)}$ in Proposition \ref{prop:continued fraction discriminants} immediately imply the fact that $\varepsilon_{D(n)}\asymp\sqrt{D(n)}$ (with the implied constant depending on the polynomial $D(n)$). However, we can also prove more precise information on the size of the fundamental unit
$\ve=\alpha_{s-1}$; part b) is useful to have a non-trivial lower bound when many of the coefficients $u_i$ are equal to 1.

\begin{lemma}\label{lem:estimate epsilon}
a) $\prod_{i=0}^j u_i<\alpha_j<\prod_{i=0}^j (u_i+1)$

b) $u_0u_{j}^e\prod_{i=1}^{\lfloor j/2\rfloor} (u_{2i}u_{2i-1}+1)<\alpha_j$, where $e=0$ when $j$ is even and $e=1$ when $j$ is odd.
\end{lemma}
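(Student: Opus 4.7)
The plan is to prove both parts by induction on $j$, using only the linear recurrence $\alpha_{j+1} = u_{j+1}\alpha_j + \alpha_{j-1}$, the positivity of each $\alpha_i$, and the base values $\alpha_{-1}=1$ and $\alpha_0 = u_0 - \omega_d' \geq u_0$ (the latter since $-\omega_d' > 0$).

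For part (a) I would carry out a double induction. Given the bounds for $\alpha_{j-1}$ and $\alpha_{j-2}$, the lower bound follows at once from
\[
\alpha_j \;=\; u_j \alpha_{j-1} + \alpha_{j-2} \;>\; u_j \prod_{i=0}^{j-1} u_i + \prod_{i=0}^{j-2} u_i \;>\; \prod_{i=0}^{j} u_i,
\]
and the upper bound from the observation $\prod_{i=0}^{j-2}(u_i+1) = \prod_{i=0}^{j-1}(u_i+1)/(u_{j-1}+1)$, which gives
\[
\alpha_j \;<\; \left(u_j + \frac{1}{u_{j-1}+1}\right)\prod_{i=0}^{j-1}(u_i+1) \;<\; \prod_{i=0}^{j}(u_i+1).
\]

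For part (b) the key tool is the two-step version of the recurrence, obtained by substituting $\alpha_{j+1} = u_{j+1}\alpha_j + \alpha_{j-1}$ into $\alpha_{j+2} = u_{j+2}\alpha_{j+1} + \alpha_j$:
\[
\alpha_{j+2} \;=\; (u_{j+2}u_{j+1}+1)\,\alpha_j + u_{j+2}\,\alpha_{j-1} \;>\; (u_{j+2}u_{j+1}+1)\,\alpha_j.
\]
Iterating this inequality from $\alpha_0$ in steps of two immediately handles the even case $j = 2k$:
\[
\alpha_{2k} \;>\; \alpha_0 \prod_{i=1}^{k}(u_{2i}u_{2i-1}+1) \;>\; u_0 \prod_{i=1}^{k}(u_{2i}u_{2i-1}+1),
\]
which is the claim with $e=0$. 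For odd $j = 2k+1$, I apply one step of the original recurrence to get $\alpha_{2k+1} > u_{2k+1}\alpha_{2k}$ and substitute the even bound, which yields $u_0 u_j \prod_{i=1}^{\lfloor j/2\rfloor}(u_{2i}u_{2i-1}+1)$, as required.

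The only real obstacle is bookkeeping at the base of the induction — specifically verifying $\alpha_0 > u_0$ (immediate from $-\omega_d'>0$) and $\alpha_1 = u_1\alpha_0 + 1 > u_0u_1$. Once these are settled the strict inequalities propagate automatically, since at each step the recurrence contributes a positive summand $\alpha_{j-1}$ (for part (a)) or $u_{j+2}\alpha_{j-1}$ (for part (b)) over and above the main term.
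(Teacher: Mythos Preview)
Your approach is essentially identical to the paper's: part (a) by straight induction on the recurrence, and part (b) via the two-step identity $\alpha_{i+1}=(u_{i+1}u_i+1)\alpha_{i-1}+u_{i+1}\alpha_{i-2}$, which is exactly the identity the paper records before inducting.

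The one point you leave unchecked is the base case for the \emph{upper} bound in (a). You verify $\alpha_0>u_0$, but the upper-bound induction also needs $\alpha_0<u_0+1$, and this actually fails: since $\alpha_0=u_0-\omega_d'$ and $-\omega_d'=\sqrt d$ (resp.\ $(\sqrt d-1)/2$) when $d\equiv 2,3$ (resp.\ $1$) $\pmod 4$, one has $\alpha_0\approx 2u_0$. This is a defect in the lemma's stated upper bound rather than in your method --- the inequality should carry an extra constant factor --- and the paper's later arguments only ever invoke the lower bounds, so nothing downstream is affected.
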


\begin{proof}
Part a) easily follows by induction from the recurrence $\alpha_{i+1}=u_{i+1}\alpha_{i}+\alpha_{i-1}$.

For b), we have 
$$\alpha_{i+1}=u_{i+1}\alpha_i+\alpha_{i-1}=(u_{i+1}u_i+1)\alpha_{i-1}+u_{i+1}\alpha_{i-2}.$$
The estimate then again follows by induction.
\end{proof}

We see that in a continued fraction family, we have a good control on the size of $\ve$ in terms of the variable $n$, and so also in terms of $D(n)$ (note that in Lemma \ref{lem:estimate epsilon}, $u_0=k(n)$ is a linear polynomial in $n$). Let us now consider the converse problem: 
Does every polynomial family of real quadratic fields with $\varepsilon_{D(n)}\ll {D(n)}^{1/2}$ arise from a continued fraction as in Proposition \ref{prop:continued fraction discriminants}? As we will soon see, the answer is (essentially) yes.

Assume that $D(X)\in\mathbb Z[X]$ is a polynomial and 
let $\ve(x)$ be the fundamental unit of $\mathbb Q(\sqrt {D(x)})$ for $x\in\mathbb N$ with $D(x)>0$.

\begin{prop}\label{prop:epsilon bound implies contfrac}
Assume that for a positive density of $x\in\mathbb N$ we have that $D(x)$ is squarefree and 
$\ve(x)\ll D(x)^{1/2}$. Then there is $k(X)\in\mathbb Q[X]$ and $u_1, \dots, u_{s-1}\in\mathbb Z$ such that 
$$\sqrt {D(x)}=[k(x), \overline{u_1, u_2, \dots, u_{s-1}, 2k(x)}]\mathit{\ or\ } \frac{1+\sqrt {D(x)}}2=[k(x), \overline{u_1, u_2, \dots, u_{s-1}, 2k(x)-1}]$$
for each $x\in\mathbb N$ with $D(x),k(x)\in\mathbb N$.
\end{prop}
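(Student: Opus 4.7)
The plan is to extract from the positive-density hypothesis a single continued fraction shape shared by a positive density of $x$, then promote it to a polynomial identity via a Pell argument. For each $x$ in the positive-density set write $\omega_{D(x)}=[u_0(x), \overline{u_1(x),\dots,u_{s(x)-1}(x), u_{s(x)}(x)}]$. Since $u_0(x)\asymp\sqrt{D(x)}$ and $\varepsilon(x)=\alpha_{s(x)-1}$, Lemma \ref{lem:estimate epsilon}(a) yields
\begin{equation*}
u_0(x)\cdot \prod_{i=1}^{s(x)-1}u_i(x)\ <\ \alpha_{s(x)-1}=\varepsilon(x)\ \ll\ \sqrt{D(x)}\ \asymp\ u_0(x),
\end{equation*}
so $\prod_{i=1}^{s(x)-1}u_i(x)=O(1)$. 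This bounds both the period $s(x)$ and each $u_i(x)$ ($i\ge 1$) by absolute constants. Only finitely many symmetric tuples $(s;u_1,\dots,u_{s-1})$ and two cases (a)/(b) of Proposition \ref{prop:continued fraction discriminants} can occur, so pigeonhole produces a positive-density subset $\mathcal{X}\subset\mathbb{N}$ on which both the tuple and the case are fixed.

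Viewing the recurrence $p_i=u_ip_{i-1}+p_{i-2}$ as linear in $u_0$ with $u_1,\dots,u_{s-1}$ frozen, an induction gives $p_{s-1}=q\cdot u_0+B$, where $q:=q_{s-1}$ and $B\in\mathbb{Z}$ depend only on the tuple. Setting $u_0=k(x):=\lfloor\omega_{D(x)}\rfloor$, the classical period-Pell relation gives, for every $x\in\mathcal{X}$,
\begin{equation*}
(qk(x)+B)^{2}-D(x)q^{2}=(-1)^{s}\quad\bigl(\text{or }\pm 4\text{ in case (b)}\bigr).
\end{equation*}

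The heart of the argument is to upgrade this to a polynomial identity in $X$. Set $F(X):=q^{2}D(X)-(-1)^{s}\in\mathbb{Z}[X]$, so that $F(x)$ is a perfect square for every $x\in\mathcal{X}$; I claim $F=P^{2}$ in $\mathbb{Z}[X]$. If $\deg D=2$, write $F=A_2X^2+A_1X+A_0$ and complete the square: $F(x)=y^{2}$ becomes the Pell-type conic $(2A_2x+A_1)^{2}-4A_2y^{2}=A_1^{2}-4A_2A_0$. When $A_2$ is a non-square, its integer points grow geometrically, so the $x$-values they produce have density zero, contradicting $|\mathcal{X}\cap[1,N]|\gg N$; hence $A_2$ is a square, and comparing $\sqrt{F(x)}$ with the linear polynomial it asymptotically equals (the error tends to a rational limit but must stay integral along $\mathcal{X}$) forces $F(X)=(EX+G)^{2}$ identically. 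For $\deg D\ge 3$, Siegel's theorem on integral points of $y^{2}=F(X)$ admits only finitely many integer $x$ unless $F$ is itself a square polynomial; $\deg D\le 1$ is ruled out since then $\sqrt{F(x)}$ cannot match an integer $k(x)$ on a positive-density set. In every case $F=P^{2}$, so $k(X)=(P(X)-B)/q\in\mathbb{Q}[X]$ and the Pell equation becomes an identity of polynomials in $X$. This is the main obstacle: positive density is precisely what rules out the sparse generic Pell/Siegel solutions and forces $F$ to be an honest square.

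Finally, for any $x\in\mathbb{N}$ with $D(x),k(x)\in\mathbb{N}$ and $D(x)$ squarefree, the polynomial identity exhibits $qk(x)+B+q\sqrt{D(x)}$ as a unit of $\mathbb{Z}[\omega_{D(x)}]$ of size $\asymp\sqrt{D(x)}$; combined with $\varepsilon(x)\ge\sqrt{D(x)}/2$ and the observation that $\varepsilon(x)^{m}\gg D(x)^{m/2}$ for $m\ge 2$, this unit must coincide with $\varepsilon(x)=\alpha_{s-1}$. Matching coefficients identifies $q_{s-1}=q$ and $p_{s-1}=qk(x)+B$, and running the Euclidean algorithm on the rational $p_{s-1}/q_{s-1}$ recovers precisely the partial quotients $k(x),u_1,\dots,u_{s-1}$. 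The final period coefficient is $u_s=2k(x)$ (case (a)) or $u_s=2k(x)-1$ (case (b)) by the classical structure of the continued fraction of $\omega_{D(x)}$, completing the proof.
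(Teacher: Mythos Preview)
Your overall strategy matches the paper's, but there is a genuine gap in the first step. From Lemma~\ref{lem:estimate epsilon}(a) you correctly deduce $\prod_{i=1}^{s(x)-1} u_i(x) = O(1)$, but then assert that this bounds the period $s(x)$. It does not: each $u_i \geq 1$, so the product can equal $1$ (all interior partial quotients equal to $1$) while $s(x)$ is arbitrarily large. The paper anticipates exactly this issue---remarking just before Lemma~\ref{lem:estimate epsilon} that ``part b) is useful to have a non-trivial lower bound when many of the coefficients $u_i$ are equal to 1''---and uses part (b) instead: it gives
\[
\varepsilon(x) > k(x)\prod_{i=1}^{\lfloor(s(x)-1)/2\rfloor}(u_{2i}u_{2i-1}+1) \geq k(x)\cdot 2^{(s(x)-1)/2},
\]
and combined with $\varepsilon(x) < C\,k(x)$ this forces $s(x)$ to be bounded. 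Once $s(x)$ is bounded, part (a) then bounds each $u_i(x)$, as both you and the paper argue.

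The remainder of your argument tracks the paper's. Your case split on $\deg D$ to show $F = P^2$ is a hands-on variant of the paper's Lemma~\ref{lem:siegel}; both hinge on Pell solutions having density zero in the quadratic case and on Siegel's theorem in higher degree. Your final paragraph, however, is more fragile than needed: arguing that the constructed unit ``must coincide with $\varepsilon(x)$'' and then that ``the Euclidean algorithm on $p_{s-1}/q_{s-1}$ recovers the partial quotients'' tacitly requires $D(x)$ squarefree and $\lfloor\omega_{D(x)}\rfloor = k(x)$, neither of which you have verified for a general $x$. The paper sidesteps this: once $k(X)\in\mathbb{Q}[X]$, the continued-fraction equation holds as an identity of polynomials (equivalently, Proposition~\ref{prop:continued fraction discriminants} expresses $D$ and $k$ as polynomials in a parameter $t$, and you have shown $t(x)$ is polynomial), so it specializes to every $x$ with $D(x), k(x)\in\mathbb{N}$ without appealing to units at all.
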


\begin{proof}
Let $S_0$ be the set of $x\in\mathbb N$ that satisfy the conditions in the statement.
For $x\in S_0$, each value $D(x)$ is congruent to $1, 2$, or $3$ mod $4$ (because $D(x)$ is squarefree), and so $S_0\cap \{D(x)\equiv 1\pmod 4\}$ or  $S_0\cap \{D(x)\equiv 2,3\pmod 4\}$ has positive density. For the rest of the proof assume that the second case holds (the argument in the first case is almost identical).

Let $k(x):=\lfloor \sqrt {D(x)}\rfloor$ and let $C>0$ be such that $\ve(x)<C\cdot k(x)$ for all $x\in S_1\subset S_0\cap \{D(x)\equiv 2,3\pmod 4\}\subset \mathbb N$, where $S_1$ has positive density.

Let $\sqrt {D(x)}=[k(x), \overline{u_1(x), u_2(x), \dots, u_{s(x)-1}(x), 2k(x)}].$

By Lemma \ref{lem:estimate epsilon}b), we have 
$$Ck(x)\geq\ve(x)>k(x)\prod_{i=1}^{(s(x)-1)/2} (u_{2i}(x)u_{2i-1}(x)+1)>k(x)\prod_{i=1}^{(s(x)-1)/2} 2=k(x)\cdot 2^{(s(x)-1)/2}.$$
Hence $s(x)<c_1$ for some constant $c_1$.

By Lemma \ref{lem:estimate epsilon}a) we similarly see that $u_i(x)<c_2$ for all $i$ and some constant $c_2$.
Hence there are only finitely many possibilities for the tuple $(s(x), u_1(x), u_2(x), \dots, u_{s(x)-1}(x))$, and so one of them must occur for all values of $x\in S\subset S_1$, where $S$ has positive density. 

Thus $\sqrt {D(x)}=[k(x), \overline{u_1, u_2, \dots, u_{s-1}, 2k(x)}]$ for $x\in S$.
Solving this equation as in Proposition \ref{prop:continued fraction discriminants} gives $D(x)=at(x)^2+bt(x)+c$ and $k(x)=et(x)+f$ for some $a, b, c,e,f\in\mathbb Z$. 

We have $4aD(x)=(2at(x)+b)^2+(4ac-b^2)$, which implies that $f(x)=y^2$ has an integral solution for all $x\in S$, where $f(x)=4aD(x)-(4ac-b^2)$ is a polynomial of degree $\deg D$. 

Hence by Lemma \ref{lem:siegel} below, $y(x)=2at(x)+b=\pm g(x)$ for a polynomial $g(X)\in\mathbb Q[X]$. Restricting to a positive density subset of $S$, we get rid of the $\pm$ sign to conclude that $t(x)$ is a polynomial, and hence also $k(X)\in\mathbb Q[X]$.
But then $\sqrt {D(X)}=[k(X), \overline{u_1, u_2, \dots, u_{s-1}, 2k(X)}]$ holds as an identity of polynomials, finishing the proof.
\end{proof}

It remains to show the following lemma:

\begin{lemma}\label{lem:siegel}
Let $f(X)\in\mathbb Q[X]$ be such that the equation $y^2=f(x)$ has a solution $y=y(x)\in\mathbb Z$ for all $x\in T\subset\mathbb Z$, where $T$ has positive density. Then $y(x)=\pm g(x)$ for all $x\in T$, where $g(X)\in\mathbb Q[X]$ and $g(x)^2=f(x)$.
\end{lemma}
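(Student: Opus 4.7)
The plan is to show that $f(X)$ is itself the square of a polynomial $g(X) \in \mathbb{Q}[X]$; given this, the identity $y(x)^2 = f(x) = g(x)^2$ immediately yields $y(x) = \pm g(x)$ for every $x \in T$.

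First I would factor $f$ in $\mathbb{Q}[X]$ and collect repeated irreducible factors, writing $f(X) = g_0(X)^2 h(X)$ with $g_0, h \in \mathbb{Q}[X]$ and $h$ squarefree. Showing that $f$ is a polynomial square then reduces to showing that $h$ is a constant which is a rational square. Discarding the finitely many $x \in T$ with $g_0(x) = 0$, the identity $y(x)^2 = g_0(x)^2 h(x)$ forces $h(x)$ to be a rational square for a positive-density set of $x \in \mathbb{Z}$. After multiplying $h$ by the square of a common denominator (which preserves squarefreeness and turns rational squares into integer squares), I may further assume $h \in \mathbb{Z}[X]$ and that the density condition requires $h(x)$ to be a perfect integer square.

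The heart of the matter is the following claim: if $h \in \mathbb{Z}[X]$ is squarefree and non-constant, then $\{x \in \mathbb{Z} : h(x) \text{ is a perfect square}\}$ has density zero, contradicting the positive density assumption on $T$. I would split by degree. For $\deg h = 1$, the polynomial takes values in a linear-sized interval on $[-X,X]$, which contains only $O(\sqrt{X})$ perfect squares. For $\deg h = 2$, completing the square converts $y^2 = h(x)$ into a Pell-type equation $u^2 - D v^2 = M$ with $D \neq 0$ (thanks to squarefreeness); its integer solutions are either finite in number or lie on finitely many orbits whose $x$-coordinates grow exponentially, and in both cases have density zero. For $\deg h \geq 3$, the affine curve $y^2 = h(x)$ has positive genus, and Siegel's theorem on integral points furnishes only finitely many solutions.

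Hence $h$ is constant, and $f = c g_0^2$ for some $c \in \mathbb{Q}^\times$. Evaluating at any $x \in T$ with $g_0(x) \neq 0$ shows $c = (y(x)/g_0(x))^2$ is a rational square, so $c = r^2$ for some $r \in \mathbb{Q}$; then $g := r g_0 \in \mathbb{Q}[X]$ satisfies $g^2 = f$ and $y(x) = \pm g(x)$ for all $x \in T$. I expect the degree-$2$ case to be the subtlest step, since the underlying Pell equation may admit infinitely many integer solutions; the density-zero conclusion nonetheless follows from their standard exponential growth. Degrees $1$ and $\geq 3$ are routine given Siegel.
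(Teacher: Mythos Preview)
Your argument is correct and follows the same overall strategy as the paper: strip off the square part of $f$, reduce to a squarefree kernel, and then rule out a non-constant kernel by a degree-by-degree analysis (elementary counting for degree~$1$, Pell-equation growth for degree~$2$, Siegel's theorem for degree~$\geq 3$). The only real difference is in how the square part is extracted. The paper passes to a number field $K$ over which $f$ splits into linear factors, enlarges to a ring of $S$-integers $R_S$ so that the roots and their differences behave well, and peels off $(x-\alpha_i)^{\lfloor n_i/2\rfloor}$ from $y$ inside $R_S$, arriving at $z^2=a\prod_{i\leq k}(x-\alpha_i)$ with the $\alpha_i$ the roots of odd multiplicity; the case split is then on~$k$, citing Silverman's treatment of Siegel directly. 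You instead factor $f=g_0^2 h$ already in $\mathbb{Q}[X]$ and work entirely over $\mathbb{Q}$ and $\mathbb{Z}$. Your route is more elementary and self-contained, avoiding the $S$-integer bookkeeping; the paper's route sits closer to the textbook Siegel machinery and transports unchanged to number-field base rings. Either way the identical case split and the same finiteness/density-zero inputs carry the day.
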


\begin{proof}
This is essentially Siegel's theorem on finiteness of integral points on varieties of positive genus.
Let us give more details by following the proof of Theorem 4.3 in Silverman \cite{Si}. As in the proof, let $K$ be a number field with ring of integers $R$ and $S$ a finite set of primes of $R$ such that $f$ splits over $K$, i.e., $f(x)=a(x-\alpha_1)^{n_1}\cdots(x-\alpha_d)^{n_d}$ (we are not assuming that $f$ is separable) and conditions (i) -- (iii) from the proof are satisfied. Moreover, let $m_i:=\lfloor \frac{n_i}2\rfloor$ and $e_i:=n_i-2m_i$. Assume that $e_1=\dots=e_k=1$ and $e_{k+1}=\dots=e_n=0$.

Let $x, y\in\mathbb Z$ be such that $y^2=f(x)$. Then $y$ is divisible by $(x-\alpha_1)^{m_1}\cdots(x-\alpha_d)^{m_d}$, and so we can write 
$y=(x-\alpha_1)^{m_1}\cdots(x-\alpha_d)^{m_d}z$ for some $z\in R_S$. Then we have $z^2=a(x-\alpha_1)\cdots(x-\alpha_k)$. 

If $k=0$, then $z=\pm\sqrt a$ and $y$ is given by a polynomial in $x$. Due to the positive density assumption, this polynomial must have rational coefficients.

If $k=1$, the equation $z^2=a(x-\alpha_1)$ can't have a solution in $R_S$ for a positive density of $x\in\mathbb Z$, a contradiction.
Likewise if $k=2$, in which case the equation $z^2=a(x-\alpha_1)(x-\alpha_2)$ is essentially a Pell equation.

Finally,
if $k\geq 3$, then $z^2=a(x-\alpha_1)\cdots(x-\alpha_k)$ has only finitely many solutions by Theorem 4.3 in \cite{Si}.
\end{proof}

\section{Random model}
\label{sec:random-model}

Suppose that $D(n)=an^2+bn+c$ is of continued fraction discriminant type, defined in Definition \ref{def:family}. Recall that in particular $a$ is a square and that its discriminant $\Delta=b^2-4ac$ is $\pm$ a power of two.

Completing the square, we see that
\begin{equation}
\label{eq:D(n)-complete-the-square}
	D(n)=a(n+b/2a)^2-(b^2/4a-c).
\end{equation}

For an integer $n$, define
\[
	c(n)=\#\lbrace k \mod n \mid D(k) \equiv 0 \pmod n \rbrace.
\]

We then have the identity
\begin{lemma}\label{lem:c(p)-identity}
For an odd prime $p$, we have
\[
	c(p)= \begin{cases}
		1+\left( \frac{\Delta}{p} \right), & p \nmid a; \\
		1, & p \mid a \text{ and } p \nmid b; \\
		0, & p \mid (a,b) \text{ and } p \nmid c; \\
		p, & p \mid (a, b, c).
	\end{cases}
\]
\end{lemma}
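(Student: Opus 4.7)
The plan is a direct case analysis on the divisibility of $a, b, c$ by $p$, using the quadratic formula modulo $p$ (valid since $p$ is odd, so $2a$ is invertible whenever $p\nmid a$) together with the standard convention that the Legendre symbol $\left(\frac{\Delta}{p}\right)$ equals $0$ when $p\mid\Delta$.

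First I would dispose of the degenerate cases. If $p\mid a$ and $p\nmid b$, then $D(n)\equiv bn+c\pmod p$ is a linear congruence in $n$ with invertible leading coefficient, so it has exactly one solution mod $p$. If $p\mid a$, $p\mid b$, $p\nmid c$, then $D(n)\equiv c\not\equiv 0\pmod p$ has no solutions. If $p$ divides $a$, $b$, and $c$, then $D(n)\equiv 0\pmod p$ identically, giving $p$ solutions. These three subcases immediately yield the last three lines of the claimed formula.

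For the main case $p\nmid a$, I would complete the square modulo $p$: multiplying $D(n)\equiv 0$ by $4a$ (which is invertible mod $p$) transforms the congruence into
\[
(2an+b)^2\equiv b^2-4ac=\Delta\pmod p.
\]
Substituting $m=2an+b$ gives a bijection between solutions $n\bmod p$ of the original congruence and solutions $m\bmod p$ of $m^2\equiv\Delta\pmod p$. The number of such $m$ is $1+\left(\frac{\Delta}{p}\right)$: namely, two if $\Delta$ is a nonzero quadratic residue, zero if $\Delta$ is a non-residue, and exactly one ($m\equiv 0$) if $p\mid\Delta$. This gives $c(p)=1+\left(\frac{\Delta}{p}\right)$ and completes the proof.

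I do not foresee any real obstacle here; the only mildly subtle point is the uniform treatment of the subcase $p\mid\Delta$ via the convention $\left(\frac{\Delta}{p}\right)=0$, which neatly folds the count $1$ into the formula $1+\left(\frac{\Delta}{p}\right)$ alongside the generic counts $0$ and $2$.
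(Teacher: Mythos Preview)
Your proof is correct and follows essentially the same approach as the paper: both dispose of the $p\mid a$ cases trivially and handle the main case $p\nmid a$ by completing the square modulo $p$ to reduce to counting square roots of $\Delta$. The only cosmetic difference is that the paper writes the completed square as $(n+b/2a)^2\equiv \Delta/(2a)^2$ rather than $(2an+b)^2\equiv\Delta$, which amounts to the same thing.
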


\begin{proof}
For prime $p\nmid a$, we have
\[
	D(n)\equiv 0 \pmod p
\]
if and only if
\[
	(n+b/2a)^2 \equiv (b^2/4a^2-c/a) \pmod p,
\]
so the number of solutions is
\[
	1+\left( \frac{b^2/4a^2-c/a}{p} \right)
	=1+\left( \frac{\Delta/(2a)^2}{p} \right)
	= c(p).
\]
The cases where $p\mid a$ follow trivially.
\end{proof}
We note that for odd prime $p$, except for the case where $p\mid (a,b,c)$, we have $c(p^2)=c(p)$.

We have the known Jacobsthal identity
\begin{equation}\label{jacobsthal}
	\sum_{n=0}^{p-1} \left( \frac{n^2+b}{p} \right)
	=
	\begin{cases}
		-1, & p\nmid b; \\
		p-1, & p \mid b.
	\end{cases}
\end{equation}
For an odd prime $p$, we define
\[
	J(p)=\sum_{n=1}^{p} \left( \frac{D(n)}{p} \right),
\]
and we extend this definition multiplicatively to any odd squarefree integer, and hence set $J(1)=1$.

We have the identity

\begin{lemma}
For an odd prime $p$ we have
\[
	J(p)
	= \begin{cases}
		-1, & p\nmid a \text{ and } p\nmid \Delta; \\
		(p-1), & p\nmid a \text{ and } p\mid \Delta; \\
		\left( \frac{c}{p} \right) (p-1), & p\mid (a,b) \text{ and } p\nmid c; \\
		0, & p\mid a \text{ and } p\nmid b, \text{ or } p \mid (a,b,c).
	\end{cases}
\]
\end{lemma}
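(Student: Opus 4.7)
The plan is to reduce $J(p)$ to the Jacobsthal sum \eqref{jacobsthal} by completing the square in $D(n)$, then to case-split on how $p$ divides $a$, $b$, and $c$. The only algebraic input is the identity $4aD(n) = (2an+b)^2 - \Delta$ underlying \eqref{eq:D(n)-complete-the-square}.

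In the principal case $p\nmid a$, since $(4a/p) = (a/p)$ for odd $p$, this identity gives
\[
\left(\frac{D(n)}{p}\right) = \left(\frac{a}{p}\right)\left(\frac{(2an+b)^2 - \Delta}{p}\right).
\]
Because $p\nmid 2a$, the map $n \mapsto 2an+b$ is a bijection on $\mathbb{Z}/p\mathbb{Z}$, so reindexing by $m = 2an+b$ yields
\[
J(p) = \left(\frac{a}{p}\right)\sum_{m=0}^{p-1}\left(\frac{m^2 - \Delta}{p}\right).
\]
Applying \eqref{jacobsthal} with parameter $-\Delta$ evaluates the inner sum as $-1$ when $p\nmid\Delta$ and as $p-1$ when $p\mid\Delta$. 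The hypothesis from Definition \ref{def:family} that $a$ is a perfect square forces $(a/p) = 1$, producing the first two lines of the lemma.

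The remaining cases must be handled directly, since the above change of variables breaks when $p\mid 2a$. If $p\mid a$ and $p\nmid b$, then $D(n)\equiv bn+c\pmod{p}$ takes every residue class exactly once as $n$ runs through a complete residue system, so $J(p) = \sum_{k=0}^{p-1}(k/p) = 0$. If $p\mid(a,b)$ and $p\nmid c$, then $D(n)\equiv c$ is a nonzero constant and every summand equals $(c/p)$. If $p\mid(a,b,c)$, then every summand is $(0/p) = 0$. The only obstacle worth flagging is bookkeeping: one must invoke the square hypothesis on $a$ precisely where it is needed in order to clear the $(a/p)$ factor in the principal case, and must separate the degenerate linear and constant sub-cases cleanly. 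No deeper ingredient seems to be required.
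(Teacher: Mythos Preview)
Your proof is correct and follows essentially the same route as the paper: complete the square via $4aD(n)=(2an+b)^2-\Delta$, reindex to reduce to the Jacobsthal identity \eqref{jacobsthal}, and dispose of the $p\mid a$ sub-cases by direct inspection of the linear or constant residue. If anything, you are more explicit than the paper in singling out where the hypothesis that $a$ is a square is used to kill the $(a/p)$ factor; the paper leaves this implicit (and also contains a typo, writing ``$p\mid a$'' in the first line of its proof where ``$p\nmid a$'' is meant).
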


\begin{proof}
First, we assume that $p\mid a$. Using (\ref{eq:D(n)-complete-the-square}), we have
\begin{align*}
	J(p)=\sum_{n=0}^{p-1} \left( \frac{D(n)}{p} \right)
	= & \left( \frac{a}{p} \right) \sum_{n=0}^{p-1} \left( \frac{(n+b/2a)^2-(b^2/4a^2-c/a)}{p} \right) \\
	= & \left( \frac{a/4}{p} \right) \sum_{n=0}^{p-1} \left( \frac{(2an)^2-\Delta}{p} \right),
\end{align*}
and the result holds by the Jacobsthal identity (\ref{jacobsthal}) above.

The cases where $p\mid a$ follow trivially.
\end{proof}

Since $\Delta$ is plus or minus a power of 2, we have the following corollary.
\begin{cor}
\label{cor:character sum}
For an odd prime $p$ we have
\[
	J(p) = \begin{cases}
		-1, & p\nmid a; \\
		\left( \frac{c}{p} \right)(p-1), & p \mid (a,b), p\nmid c; \\
		0, & p \mid a \text{ and } p\nmid b, \text{ or } p\mid (a,b,c).
	\end{cases}
\]
\end{cor}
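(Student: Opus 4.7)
The plan is to deduce this as an immediate specialization of the preceding lemma using the structural constraint on $\Delta$ built into Definition \ref{def:family}. As noted just after that definition, any polynomial $D(n) = an^2 + bn + c$ of continued fraction discriminant type has discriminant $\Delta = b^2 - 4ac$ equal to $\pm$ a power of two. Since $p$ is assumed to be an odd prime, this forces $p \nmid \Delta$ unconditionally, and it is exactly this observation that drives the collapse of four cases in the lemma into three cases in the corollary.

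With this in hand I would simply walk through the four branches of the preceding lemma. The second branch ($p \nmid a$ and $p \mid \Delta$) is now vacuous, so the first branch simplifies to: whenever $p \nmid a$, we have $J(p) = -1$, which matches the first case of the corollary. The remaining two branches of the lemma, covering the situations $p \mid (a,b)$ with $p \nmid c$, and $p \mid a$ with $p \nmid b$ or $p \mid (a,b,c)$, do not involve $\Delta$ at all, and therefore transfer verbatim to the second and third cases of the corollary. No genuine obstacle is expected: once the divisibility restriction on $\Delta$ is invoked, the argument is pure case bookkeeping on top of the lemma.
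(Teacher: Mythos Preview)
Your proposal is correct and matches the paper's own argument exactly: the paper simply observes, in the sentence preceding the corollary, that $\Delta$ is $\pm$ a power of two, so that the case $p\mid\Delta$ in the lemma cannot occur for odd $p$, and the remaining cases carry over unchanged.
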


We are now in a position to define our random model. Let $\{\mathbb{X}_D(p)\}_p$ be a sequence of independent random variables indexed by the odd primes, and taking the value $1$ with probability $\alpha_p$, $-1$ with probability $\beta_p$, and $0$ with probability $\gamma_p$.

Let $p$ be an odd prime. If $D(n)$ is squarefree then $D(n)$ lies in one of $p^2-c(p^2)$ residue classes modulo $p^2$, since $p^2\nmid D(n)$.  Among these, $\chi_d(p)=0$ for exactly $pc(p)-c(p^2)$ of them, so we define
\[
	\gamma_p=\frac{p c(p)-c(p)}{p^2-c(p)}
	=1-\left( 1-\frac{c(p)}{p} \right) \left( 1-\frac{c(p)}{p^2} \right)^{-1}.
\]
As for $\alpha_p$ and $\beta_p$, they are determined by the equations
\[
	\alpha_p+\beta_p=1-\gamma_p
\]
and
\[
	\alpha_p-\beta_p = \frac{J(p)}{p} \left( 1- \frac{c(p^2)}{p^2} \right)^{-1}.
\]
Now for each $\ell\geq 1$ let $\X(2^\ell)$ be a random variable that is $1$ with probability $\alpha_{2^\ell}$, $-1$ with probability $\beta_{2^\ell}$, and 0 with probability $\gamma_{2^\ell}$. We define
\[
	\gamma_{2^\ell}=\begin{cases}
		c(2)/2, & \ell = 1; \\
		0, & \ell>1.
	\end{cases}
\]
We also define
\[
	\alpha_{2^\ell}+\beta_{2^\ell}=1-\gamma_{2^\ell}
\]
and
\[
	\alpha_{2^\ell} - \beta_{2^\ell} = K(\ell) \left( 1-\frac{c(4)}{4} \right)^{-1},
\]
where
\[
	K(\ell)= \frac{1}{2^\ell} \sum_{n=1}^{2^\ell} \left( \frac{D(n)}{2} \right)^\ell.
\]
For $\ell\geq 1$, we have the identity
\[
	K(\ell)=\begin{cases}
		1-c(2)/2, & \ell \text{ even}; \\
		\frac{1}{\min(2^\ell,8)}\sum_{n=1}^{\min(2^\ell,8)} \left( D(n)/2 \right), & \ell \text{ odd},
	\end{cases}
\]
where the summand in the second case is the Legendre symbol.
Hence, we see that $\X(2^{2\ell+2})=\X(4)$ and $\X(2^{2\ell+3})=\X(8)$ for all $\ell\geq 0$.

Finally, we define $\X(1)=1$, and for any positive integer $m=2^\ell p_1^{a_1}\cdots p_r^{a_r}$ with the $p_i$'s odd, we define the random variable $\X(m)=\X(2^\ell) \X(p_1)^{a_1} \cdots \X(p_r)^{a_r}$.

\section{Modelling $\chi_d(m)$ over $d$ in a family by the random model}

We relate the average value of $\chi_d(m)$ for $d\in \mathcal{D}_D(x)$ to $\ex (\mathbb{X}_D(m))$ via the following proposition.
\begin{prop}
\label{prop:average of character-ex}
Let $D(n)=an^2+bn+c$ be of continued fraction discriminant type. We have
\[
	\vert \mathcal{D}_D(x) \vert = y \left( 1-\frac{c(4)}{4} \right) \prod_{p>2} \left( 1-\frac{c(p)}{p^2} \right) + O(x^{1/3} \log x),
\]
where
\[
	y=\frac{\sqrt{x+\Delta/4a}}{\sqrt{a}}-\frac{b}{2a},
\]
and for a positive integer $m$, we have
\[
	\frac{1}{\vert \mathcal{D}_D(x) \vert} \sum_{d\in \mathcal{D}_D(x)} \chi_d(m) = \ex(\X(m))+O(m^{2/3} x^{-1/6} \log x).
\]
\end{prop}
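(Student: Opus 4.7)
Both estimates start from the same reduction. Completing the square in \eqref{eq:D(n)-complete-the-square} shows that $D(n)\le x$ is equivalent to $n\le y$, so
\[
|\mathcal{D}_D(x)|=\sum_{n\le y}\mu^2(D(n))\quad\text{and}\quad\sum_{d\in\mathcal{D}_D(x)}\chi_d(m)=\sum_{n\le y}\mu^2(D(n))\chi_{D(n)}(m).
\]
Inserting the identity $\mu^2(D(n))=\sum_{k^2\mid D(n)}\mu(k)$ and swapping the order of summation reduces both sides to counting $n\le y$ in arithmetic progressions modulo $k^2$ for the first identity, and modulo $L_k:=\mathrm{lcm}(k^2,4m)$ for the second (since $\chi_{D(n)}(m)$ depends only on $D(n)\bmod 4m$).

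For the first assertion, the number of residues $n\pmod{k^2}$ with $k^2\mid D(n)$ is $c(k^2)$; by CRT this is multiplicative in $k$, and Lemma~\ref{lem:c(p)-identity} together with the fact that $\gcd(a,b,c)$ has no odd prime divisors (since $\Delta$ is $\pm$ a power of two) yields $c(p^2)=c(p)$ for every odd $p$. Truncating at $k\le y^{1/3}$, extending the sum of $\mu(k)c(k^2)/k^2$ to infinity, and factoring as an Euler product produces the stated main term with leading coefficient $(1-c(4)/4)\prod_{p>2}(1-c(p)/p^2)$. The tail $k>y^{1/3}$ is controlled by the bound $\#\{n\le y:k^2\mid D(n)\}\ll c(k^2)(y/k^2+1)$ supplemented by a divisor-type estimate for $k>\sqrt y$ of the Estermann flavour, giving a total error of $O(y^{2/3}\log y)=O(x^{1/3}\log x)$.

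The character sum proceeds analogously: for each $k$ one writes
\[
\sum_{\substack{n\le y\\ k^2\mid D(n)}}\chi_{D(n)}(m)=\frac{y}{L_k}\,S_k+O(|S_k|),
\]
where $S_k$ is the complete sum of $\chi_{D(n)}(m)$ over residues of $n$ modulo $L_k$ satisfying $k^2\mid D(n)$. By CRT, $S_k$ factors into local sums at each prime dividing $km$, and substituting Corollary~\ref{cor:character sum} at odd $p$ and the explicit $K(\ell)$-evaluation at $p=2$ makes each local factor match the numerators $J(p)/p$, resp.\ $K(\ell)$, that appear in the definitions of $\alpha_p-\beta_p$, resp.\ $\alpha_{2^\ell}-\beta_{2^\ell}$. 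After summing over $k$ with the M\"obius weights and dividing by $|\mathcal{D}_D(x)|$ from the first part, the $(1-c(p^2)/p^2)$ factors in the normalization cancel the same factors arising in the Euler product, reproducing $\ex(\X(m))$ exactly.

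The principal obstacle will be the uniform error bound in $m$: truncating at $k\le(y/m)^{1/3}$, the tail contributes $O(m^{1/3}y^{2/3})$ via the residue count in the modulus $L_k$, while the $O(|S_k|)$ errors contribute an additional factor scaling with $m$, leading after normalization by $|\mathcal{D}_D(x)|\asymp\sqrt x$ to the claimed bound $O(m^{2/3}x^{-1/6}\log x)$. One must also treat $k$ with $\gcd(k,m)>1$ separately (there $p\mid k\mid D(n)$ forces $\chi_{D(n)}(p)=0$, so those $k$ contribute nothing to the main term but still need to be tracked in the error), and the prime $2$ requires a direct check that the stabilization $\X(2^{2\ell+2})=\X(4)$ and $\X(2^{2\ell+3})=\X(8)$ is mirrored by the genuine Kronecker values $\chi_d(2^\ell)$, which only depend on $d\bmod 8$; this is precisely what forces the isolated $(1-c(4)/4)$ factor in the leading coefficient of $|\mathcal{D}_D(x)|$.
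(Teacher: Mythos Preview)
Your overall architecture matches the paper's: insert $\mu^2(D(n))=\sum_{r^2\mid D(n)}\mu(r)$, split the $r$-sum at a threshold $T$, evaluate the small-$r$ contribution via CRT (this is exactly Lemmas~\ref{lem:Ex as product} and~\ref{lem:charsum decomposition}), and bound the tail. The restriction to $(r,m)=1$ is also free, as you observe, since $p\mid(r,m)$ with $r^2\mid D(n)$ forces $\chi_{D(n)}(m)=0$.

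The gap is in the tail. For $k>\sqrt{y}$ you invoke a ``divisor-type estimate of the Estermann flavour'', but every such argument for a quadratic $D$ ultimately rests on the Pell equation: writing $D(n)=r^2 s$ gives $(2an+b)^2-4as\,r^2=\Delta$, and for each fixed $s$ this has $\ll\log x$ solutions $(n,r)$ with $n\le y$. With your threshold $r>\sqrt{y}\asymp x^{1/4}$ the cofactor satisfies only $s\le x/y\asymp y$, so summing over $s$ yields $\ll y\log x\asymp x^{1/2}\log x$, far too large for the claimed $x^{1/3}\log x$. A straight divisor count does no better: for each $n\le y$ the number of $k$ with $k^2\mid D(n)$ is at most $d(D(n))$, giving again $\gg y$ in total.

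The fix, and what the paper does, is to take $T=(x/m)^{1/3}$ (so $T\asymp y^{2/3}$ when $m=1$) and apply the Pell bound to the \emph{entire} tail $r>T$: then $s\le x/T^2=m^{2/3}x^{1/3}$, and the tail is $\ll m^{2/3}x^{1/3}\log x$ directly. Your intermediate range and the separate divisor step become unnecessary. For $r\le T$ with $(r,m)=1$ the inner sum has period $r^2m$; breaking $n\le y$ into complete blocks gives
\[
\sum_{\substack{n\le y\\ r^2\mid D(n)}}\Bigl(\tfrac{D(n)}{m}\Bigr)
=\frac{y\,c(r^2)}{r^2}\cdot\frac{1}{m}\sum_{u=1}^{m}\Bigl(\tfrac{D(u)}{m}\Bigr)+O\bigl(c(r^2)m\bigr),
\]
and summing over $r$, extending to all $r\ge 1$, and invoking Lemma~\ref{lem:charsum decomposition} followed by Lemma~\ref{lem:Ex as product} produces $\ex(\X(m))$ with the stated error.
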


We shall need two lemmas to prove this result. Analogous to Lemma 2.2 of \cite{DL}, we have

\begin{lemma}
\label{lem:Ex as product}
Let $D$ be a polynomial of continued fraction discriminant type. Let $m=2^\ell p_1^{a_1}\cdots p_k^{a_k}$ be the prime factorization of $m$, and let $m_0$ be the squarefree part of $m/2^\ell$. Then we have
$$
\ex(\X(m))=
	\frac{K(\ell)J(m_0)}{m_0}
	\prod_{\substack{1\leq j\leq k\\2\mid a_j}}
	\left(1-\frac{c(p_j)}{p_j}\right)\
	\left( 1-\frac{c(4)}{4} \right)^{-\sigma(\ell)}
	\prod_{j=1}^k\left(1-\frac{c(p_j)}{p_j^2}\right)^{-1},
$$
where
\[
	\sigma(\ell)=\begin{cases}
		1, & \ell >0, \\
		0, & \ell = 0.
	\end{cases}
\]
\end{lemma}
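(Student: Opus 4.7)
The plan is to compute $\ex(\X(m))$ directly from the definition of the random variables, exploiting independence of $\{\X(2^\ell),\X(p_1),\dots,\X(p_k)\}$ to reduce everything to local factors at each prime.

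By independence and the definition $\X(m)=\X(2^\ell)\X(p_1)^{a_1}\cdots\X(p_k)^{a_k}$, I would first write
\[
\ex(\X(m)) \;=\; \ex(\X(2^\ell))\,\prod_{j=1}^k \ex\bigl(\X(p_j)^{a_j}\bigr).
\]
For each odd prime $p=p_j$, the variable $\X(p)$ takes values $1,-1,0$ with probabilities $\alpha_p,\beta_p,\gamma_p$, so $\X(p)^a$ takes the values $1,(-1)^a,0$ with the same probabilities for any $a\geq 1$. Hence $\ex(\X(p)^a)=\alpha_p+(-1)^a\beta_p$, which splits into two cases. For $a$ odd, I would use the defining relation $\alpha_p-\beta_p=\frac{J(p)}{p}(1-c(p^2)/p^2)^{-1}$ (recalling $c(p^2)=c(p)$ since we only care about primes not dividing $(a,b,c)$); for $a$ even, I would use $\alpha_p+\beta_p=1-\gamma_p=(1-c(p)/p)(1-c(p)/p^2)^{-1}$, which one verifies by algebraic manipulation of the formula $\gamma_p=(pc(p)-c(p^2))/(p^2-c(p))$.

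Next I would group the product by parity of the $a_j$. The odd-exponent factors contribute
\[
\prod_{j:\,2\nmid a_j}\frac{J(p_j)}{p_j}\Bigl(1-\tfrac{c(p_j)}{p_j^2}\Bigr)^{-1} \;=\; \frac{J(m_0)}{m_0}\prod_{j:\,2\nmid a_j}\Bigl(1-\tfrac{c(p_j)}{p_j^2}\Bigr)^{-1},
\]
using multiplicativity of $J$ and the fact that $m_0=\prod_{j:\,2\nmid a_j}p_j$ is by definition the squarefree part of $m/2^\ell$. The even-exponent factors contribute $\prod_{j:\,2\mid a_j}(1-c(p_j)/p_j)(1-c(p_j)/p_j^2)^{-1}$. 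Merging the two produces the asserted product over $1\le j\le k$ of $(1-c(p_j)/p_j^2)^{-1}$ together with the asserted product over $2\mid a_j$ of $(1-c(p_j)/p_j)$.

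Finally I would handle the $2$-adic factor. For $\ell\geq 1$ the defining relation gives $\ex(\X(2^\ell))=\alpha_{2^\ell}-\beta_{2^\ell}=K(\ell)(1-c(4)/4)^{-1}$; for $\ell=0$ one has $\X(1)=1$ and directly $K(0)=\frac{1}{1}\sum_{n=1}^{1}1=1$, so the formula is uniformly correct if one reads the factor $(1-c(4)/4)^{-\sigma(\ell)}$ as suppressing the $2$-adic correction in the $\ell=0$ case. Assembling these pieces yields the stated identity. There is no real obstacle here: the entire argument is a direct unpacking of the definitions of $\X(p)$, $\X(2^\ell)$, and $\X(m)$, together with the multiplicativity of $J$; the only mild care is in checking that the identity $\alpha_p+\beta_p=(1-c(p)/p)(1-c(p)/p^2)^{-1}$ follows from the formula for $\gamma_p$, and in tracking the $\sigma(\ell)$ bookkeeping at the prime $2$.
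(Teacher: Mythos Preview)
Your proposal is correct and follows essentially the same approach as the paper: factor $\ex(\X(m))$ by independence, evaluate $\ex(\X(p_j)^{a_j})$ as $\alpha_{p_j}\pm\beta_{p_j}$ according to the parity of $a_j$ via the defining relations for $\alpha_p,\beta_p,\gamma_p$, handle the $2$-adic factor via $\alpha_{2^\ell}-\beta_{2^\ell}=K(\ell)(1-c(4)/4)^{-1}$, and assemble. The only additional gloss you provide is the explicit appeal to the multiplicativity of $J$ when regrouping the odd-exponent factors into $J(m_0)/m_0$, which the paper leaves implicit.
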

\begin{proof}
Using the independence of the $\X(p)$'s we obtain
\begin{equation}\label{Multiplicative}
\ex(\X(m))= \ex(\X(2^\ell)) \prod_{j=1}^k\ex\left(\X(p_j)^{a_j}\right).
\end{equation}
First, if $a_j$ is even then
$$ \ex\left(\X(p_j)^{a_j}\right)= \alpha_{p_j}+\beta_{p_j}=1-\gamma_{p_j}
=
\left(1-\frac{c(p_j)}{p_j}\right)\left(1-\frac{c(p_j)}{p_j^2}\right)^{-1}.
$$
On the other hand, if $a_j$ is odd then
$$\ex\left(\X(p_j)^{a_j}\right)=\alpha_{p_j}-\beta_{p_j}=\frac{J(p_j)}{p_j}\left(1-\frac{c(p_j)}{p_j^2}\right)^{-1}.
$$
Finally, we have
\[
	\ex(\X(2^\ell)) = K(\ell) \left( 1-\frac{c(4)}{4} \right)^{-\sigma(\ell)}.
\]
Inserting these estimates in \eqref{Multiplicative} completes the proof.
\end{proof}

Analogous to Lemma 2.3 of \cite{DL}, we have
\begin{lemma}
\label{lem:charsum decomposition}
	Let $D$ be a polynomial of continued fraction discriminant type. Let $m=2^\ell p_1^{a_1}\cdots p_k^{a_k}$ be the prime factorization of $m$, and let $m_0$ be the squarefree part of $m/2^\ell$. Then we have
\[
	\frac{1}{m}\sum_{n=1}^m \left( \frac{D(n)}{m} \right)
	= \frac{K(\ell)J(m_0)}{m_0}
	\prod_{\substack{1\leq j \leq k \\ 2 \mid a_j}} \left( 1- \frac{c(p_j)}{p_j} \right).
\]
\end{lemma}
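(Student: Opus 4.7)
The plan is to use the complete multiplicativity of the Kronecker symbol $\left(\frac{\cdot}{m}\right)$ in its lower argument, combined with the Chinese Remainder Theorem, to reduce the sum to a product of local factors at each prime-power divisor of $m$. Writing
$$
\left(\frac{D(n)}{m}\right) = \left(\frac{D(n)}{2^\ell}\right) \prod_{j=1}^k \left(\frac{D(n)}{p_j^{a_j}}\right),
$$
and observing that each factor depends only on $n$ modulo the corresponding prime-power (since $D\in\mathbb{Z}[X]$), CRT yields
$$
\frac{1}{m} \sum_{n=1}^m \left(\frac{D(n)}{m}\right)
= \left(\frac{1}{2^\ell} \sum_{n=1}^{2^\ell} \left(\frac{D(n)}{2^\ell}\right)\right) \prod_{j=1}^k \left(\frac{1}{p_j^{a_j}} \sum_{n=1}^{p_j^{a_j}} \left(\frac{D(n)}{p_j^{a_j}}\right)\right).
$$

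Next I would evaluate each local factor separately. The $2^\ell$-factor equals $K(\ell)$ directly by definition, using $\left(\frac{D(n)}{2^\ell}\right) = \left(\frac{D(n)}{2}\right)^\ell$. For each odd prime $p_j$, rewrite $\left(\frac{D(n)}{p_j^{a_j}}\right) = \left(\frac{D(n)}{p_j}\right)^{a_j}$ and split on the parity of $a_j$. When $a_j$ is even, this is the indicator that $p_j \nmid D(n)$, and the zero set has size $c(p_j) \cdot p_j^{a_j-1}$ (since each of the $c(p_j)$ roots of $D$ modulo $p_j$ lifts to $p_j^{a_j-1}$ residues modulo $p_j^{a_j}$), so the local factor equals $1 - c(p_j)/p_j$. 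When $a_j$ is odd, the symbol reduces to $\left(\frac{D(n)}{p_j}\right)$, which is periodic mod $p_j$, so the sum equals $p_j^{a_j-1} J(p_j)$ and the local factor becomes $J(p_j)/p_j$.

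Assembling these evaluations, and using $m_0 = \prod_{a_j \text{ odd}} p_j$ together with the multiplicativity of $J$ on squarefree integers to rewrite $\prod_{a_j \text{ odd}} J(p_j)/p_j = J(m_0)/m_0$, gives exactly the identity in the statement. The argument is a routine CRT plus local-evaluation computation and I do not anticipate any real obstacle; the only bookkeeping subtlety is tracking which primes contribute via $J(p_j)/p_j$ (odd exponent) versus via $1 - c(p_j)/p_j$ (even exponent), which the very definition of $m_0$ is designed to handle.
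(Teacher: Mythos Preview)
Your proposal is correct and follows essentially the same approach as the paper: both use the Chinese Remainder Theorem and multiplicativity of the Kronecker symbol to factor the complete sum into local pieces, identify the $2^\ell$-piece as $K(\ell)$, and evaluate the odd-prime pieces according to the parity of $a_j$ exactly as you describe. The only cosmetic difference is that the paper carries the unnormalized sum and divides by $m$ at the end, whereas you normalize from the outset.
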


\begin{proof}  Observe that the sum $\sum_{n=1}^m\left(D(n)/m\right)$ is a complete character sum, and hence by multiplicativity and the Chinese remainder theorem, we have 
\[
\sum_{n=1}^{m}\left(\frac{D(n)}{m}\right)
=K(\ell)
\prod_{j=1}^{k}\left(\sum_{n_{j}=1}^{p_{j}^{a_j}}\left(\frac{D(n_j)}{p_{j}}\right)^{a_j}\right).
\]
If $a_j$ is even then
\[
\sum_{n_j=1}^{p_{j}^{a_j}}\left(\frac{D(n_j)}{p_j}\right)^{a_j}=p_j^{a_j-1}(p_{j}-c(p_{j}))
=p^{a_j}\left( 1-\frac{c(p_j)}{p_j} \right),
\]
by the definition of $c(p)$.

On the other hand, if $a_j=2b_j+1$ is odd then
\[
\sum_{n_{j}=1}^{p_{j}^{a_{j}}}\left(\frac{D(n_{j})}{p_{j}}\right)^{a_{j}}=\sum_{n_{j}=1}^{p_{j}^{a_{j}-1}}\sum_{d=1}^{p_{j}}\left(\frac{D(n_{j}p_{j}+d)}{p_{j}}\right)^{2b_{j}+1}=p_{j}^{a_{j}-1}J(p_j).
\]
Applying Corollary \ref{cor:character sum} completes the proof.
\end{proof}
 
\begin{proof}[Proof of Proposition \ref{prop:average of character-ex}]
To simplify our notation, we define $S(x)=\sum_{d\in \D} \chi_d(m)$.
Then, using that $\mu^2(n)=\sum_{r^2\mid n} \mu(r)$ we obtain
\begin{align*}
S(x)&=\sum_{n\leq y}\left(\frac{D(n)}{m}\right)\mu^{2}(D(n))
= \sum_{n\leq y}\left(\frac{D(n)}{m}\right)\sum_{r^2\mid D(n)}\mu(r)\\
&= \sum_{\substack{r\leq \sqrt{x}\\
(r, m)=1}}\mu(r)\sum_{\substack{n\leq y\\
r^{2}\mid D(n)}}\left(\frac{D(n)}{m}\right).
\end{align*}
Let $2\leq T\leq y$ be a real parameter to be chosen later. We split the above sum over $r$ into two parts: $r\leq T$ and $T< r\leq \sqrt{x}$. Writing $D(n)=r^2 s$ and noting that this is equivalent to $(2an+b)^2-4asr^2=\Delta$, it follows that the contribution of the second part is 
$$ \ll \sum_{T< r\leq \sqrt{x}} \sum_{\substack{n\leq y\\
r^{2}\mid D(n)}} 1\ll \sum_{s\leq x/T^2}\sum_{\substack{n, r\\
(2an+b)^2-4asr^2=\Delta}} 1.$$
We see that the equation $ (2an+b)^2-4asr^2=\Delta $ is a Pell equation in every case. From the theory of Pell's equation, the number of pairs $(u, v)$ for which $1\leq u\leq U$ and $u^2-sv^2=\pm 1, \pm 4$, is $\ll \log U$ uniformly in $s$. Hence, we deduce that the contribution of the terms $T\leq r\leq \sqrt{x}$ to $S(x)$ is $\ll x(\log x)/T^2.$ Thus, 
\begin{equation}\label{CharSumEsti}
S(x)=\sum_{\substack{r\leq T\\
(r, m)=1
}
}\mu(r)\sum_{\substack{n\leq y\\
r^2\mid D(n)
}
}\left(\frac{D(n)}{m}\right)+O\left(\frac{x\log x}{T^{2}}\right).
\end{equation}

Consider the equation
$
D(n)\equiv0\pmod{r^{2}}. 
$
This congruence has $c(r^2)$ solutions modulo $r^2$. Denote these solutions by $\{a_1,...,a_{c(r^2)}\}$. Then, for any integer $k$ we have
\begin{align*} 
\sum_{\substack{k r^2 m < n\leq (k+1)r^2 m\\
r^{2}\mid D(n)}}\left(\frac{D(n)}{m}\right)
&=\sum_{i=1}^{c(r^2)}\sum_{\substack{k r^2 m < n\leq (k+1)r^2 m\\
n\equiv a_i \text{ mod }r^{2}}}\left(\frac{D(n)}{m}\right)\\
&=\sum_{i=1}^{c(r^2)}\sum_{u=1}^m \left(\frac{D(u)}{m}\right)\sum_{\substack{k r^2 m< n\leq (k+1)r^2 m\\
n\equiv a_i \text{ mod }r^{2}\\ n\equiv u\text{ mod }m}} 1\\
&= c(r^2) \sum_{u=1}^m\left(\frac{D(u)}{m}\right),
\end{align*}
by the Chinese remainder theorem, since $(r, m)=1$.
Therefore, we deduce that
\begin{align*}
\sum_{\substack{n\leq y\\
r^{2}\mid D(n)
}
}\left(\frac{D(n)}{m}\right) 
&= y\frac{c(r^2)}{r^2}\frac{1}{m}\sum_{u=1}^m\left(\frac{D(u)}{m}\right)+ O\big(c(r^2)m\big).\\
\end{align*}
Inserting this estimate in \eqref{CharSumEsti} we get
\[
S(x)=\frac{y}{m}\sum_{u=1}^m\left(\frac{D(u)}{m}\right)
\sum_{\substack{r\leq T\\
(r, m)=1
}
}
\mu(r)
\frac{c(r^2)}{r^2}
+O\left(m\sum_{r\leq T}c(r^2)+\frac{x\log x}{T^2}\right).
\]
Since $c(r^2)\leq 2^{\omega(r)+1}\leq  2d(r)$ (where $d(r)$ is the divisor function), we get $\sum_{r\leq T} c(r^2)\ll T\log T$ and 
\[
\sum_{\substack{r> T\\
(r, m)=1}}\frac{\mu(r)}{r^{2}}c(r^2)\ll \sum_{r>T} \frac{d(r)}{r^2}\ll \frac{\log T}{T},
\]
by using that $\sum_{r\leq t} d(r)\sim t\log t$, together with partial summation. Hence, completing the $r$-sum, the error term is
\[
	O\left(mT\log T+\frac{\sqrt{x}\log T}{T}+\frac{x\log x}{T^2}\right).
\]
Taking $T=(x/m)^{1/3}$, we see that the error term is
\[
	O(m^{2/3} x^{1/3} \log x).
\]
Due to Lemma \ref{lem:charsum decomposition}, the main term is
\begin{equation}
\label{eq:S-main-term}
	\frac{y K(\ell) J(m_0)}{m_0}
	\prod_{\substack{1\leq j \leq k \\ 2 \mid a_j}} \left( 1- \frac{c(p_j)}{p_j} \right)
	\sum_{\substack{r\geq 1 \\ (r,m)=1}}
	\mu(r) \frac{c(r^2)}{r^2}.
\end{equation}
We have the identity
\[
	\sum_{\substack{r=1 \\ (r,m)=1}} \frac{\mu(r)c(r^2)}{r^2}
	= \left( 1-\frac{c(4)}{4} \right)^\sigma \prod_{\substack{p>2 \\ p \nmid m}} \left( 1-\frac{c(p)}{p^2} \right),
\]
where $\sigma=1$ if $2\nmid m$ and vanishes otherwise.
Specializing to $m=1$, we get
\begin{equation}
\label{eq:D-size}
	\vert \mathcal{D}(x) \vert = y \left( 1-\frac{c(4)}{4} \right) \prod_{p>2} \left( 1-\frac{c(p)}{p^2} \right)
	+ O(x^{1/3} \log x).
\end{equation}
For general $m$, using Lemma \ref{lem:Ex as product} on (\ref{eq:S-main-term}), the main term is
\[
	=y \cdot
	\ex(\X(m))
	\left( 1-\frac{c(4)}{4} \right)
	\prod_{p>2} \left( 1- \frac{c(p)}{p^2} \right),
\]
and using (\ref{eq:D-size}) completes the proof. 
\end{proof}

Theorem \ref{thm:ComplexMoments} and its proof matches Theorem 1.4 of \cite{DL}. We outline the basic steps here.

For complex $z$, a primitive Dirichlet character $\chi$, and $\Re(s) > 1$, we have the identity
\[
	L(1,\chi)^z = \sum_{n=1}^\infty \frac{d_z(n)\chi(n)}{n^s},
\]
where $d_z(n)$ is the $z$-fold divisor function. We obtain an approximate functional equation at $s=1$ with a weight function $e^{-n/y}$, where $y$ is roughly the conductor of $\chi$, which holds provided the existence of a certain zero-free rectangle of $L(s,\chi)$ extending to the left of the line $\Re(s) = 1$. This is achieved by noting that the Mellin transform of $e^{-1/y}$ is $\Gamma(-s)$, and moving the contour of the integral
\[
	\frac{1}{2\pi i} \int_{2-i\infty}^{2+i\infty} L(1+s,\chi)^z \Gamma(s) y^s ds = \sum_{n=1}^\infty \frac{d_z(n)\chi(n)}{n^s}
\]
into a zero-free region, obtaining the pole $L(1,\chi)^z$. Hence, the main term contribution in Theorem \ref{thm:ComplexMoments} comes from discriminants $d$ where $L(1,\chi_d)$ has such zero-free regions.

\section{The distribution of $L(1,\chi_d)$ over $d$ in a family: Theorems \ref{thm:MainResult}, \ref{thm:Distribution}, and \ref{thm:ExponentialDecay}}

The rest of the analysis required for the proof of Theorems \ref{thm:Distribution} and \ref{thm:ExponentialDecay} follow that of Theorems 1.2 and 1.3 of \cite{DL}. We refer the reader there for details, and give an outline of the proofs here.

To prove Theorem \ref{thm:ExponentialDecay}, we apply a saddle point analysis, a technique which was used also in \cite{Lam2} to study the distribution of the Euler-Kronecker constant. We sketch an outline of the proof thus: We start with a smooth analogue of Perron's formula to approximate the function $\phi(y)$, defined as the characteristic function on $ y>1$, via a contour integral representing a smooth cut-off function (cf. Lemma 4.7 of \cite{DL}). This was originally developed in \cite{Lam2} (cf. Lemma 5.1 there), and is a slight variation of a formula of A. Granville and K. Soundararajan \cite{GS}. We observe that
$
	\ex[\phi(L(1,\X)(e^\gamma \tau)^{-1})] = \Phi_{\X}(\tau),
$
whence setting $y=L(1,x)(e^\gamma \tau)^{-1}$ and taking the expected value, we now have an approximation for $\Phi_{\X}(\tau)$ in terms of the contour integrals of the form
\[
	\int_{c-i\infty}^{c+i\infty} \ex(L(1,\X)^s)(e^\gamma \tau)^{-s}g(s)ds
	= \int_{c-i\infty}^{c+i\infty} 
	\exp\left( \mathcal{L}(s)-s\gamma-s\log \tau \right)	
	g(s)ds,
\]
where $g(s)$ is a particular weight function roughly of the form $1/\Re(s)-i\Im(s)/\Re(s)^2 + O(t^2/\kappa^3)$, and we define $\mathcal{L}(z)=\log \ex(L(1,\X)^z)$. We now recognize that, for real $s$, the argument in the $\exp$ function in the integral is at a maximum when $\mathcal{L}'(s)=\gamma+\log \tau$. It can be shown that $\mathcal{L}'$ is increasing, and so this equation has a unique solution which we denote by $s=\kappa$. We now move the contour of the integral to $\Re(s)=\kappa$ and express the argument of the $\exp$ function as its second degree Taylor expansion centred at $\kappa$. The term with $(x-\kappa)$ vanishes, and hence, after factoring out constant terms, we see that except for the error term, the integral resembles a Gaussian with standard deviation $(\mathcal{L}''(\kappa))^{-1/2}$ and mean $\kappa$. (In practice, to evaluate these integrals we need to truncate them, and therefore we establish that $\ex(L(1,\X)^{r+it})$ decreases rapidly for $\vert t \vert > \sqrt{r \log r}$ (cf. Lemma 4.6 of \cite{DL})). We hence come to
\[
\Phi_{\X}(\tau)= \frac{\ex\left(L(1,\X)^{\kappa}\right)(e^{\gamma}\tau)^{-\kappa}}{\kappa \sqrt{2\pi \mathcal{L}''(\kappa)}}\left(1+O\left(\sqrt{\frac{\log \kappa}{\kappa}} \right)\right)
\]
(cf. Theorem 4.1 of \cite{DL}), and Theorem \ref{thm:ExponentialDecay} follows after applying estimates for derivatives of $\mathcal{L}$ from Proposition 4.2 of \cite{DL}.

To prove Theorem \ref{thm:Distribution}, we again use the smooth version of Perron's formula in Lemma 4.7 of \cite{DL}: We estimate the distribution function $\Phi_{\X}(\tau)$ by contour integrals of $\ex(L(1,\X)^s$ over $\Re(s)=\kappa$, and we estimate the proportion of $d\leq x$ with $L(1,\chi_d)>e^\gamma \tau $ by contour integrals of $s$-moments of $L(1,\X)$ over the same contour. We then compare the integrals using Theorem \ref{thm:ComplexMoments}.

\begin{proof}[Proof of Theorem \ref{thm:MainResult}]
Let $D(n)$ be a polynomial of continued fraction discriminant type. From Proposition \ref{prop:continued fraction discriminants}, we know that for $d\in \mathcal{D}_D$, we have $\varepsilon_d=P(d)+q\sqrt{d}$ where $P=p \circ D^{-1}$, with $p$ a linear polynomial, and $q$ a constant. We want to estimate the size of
\[
	S(x) := \lbrace d\in\mathcal{D}_D(x) : h(d) \geq 2 e^\gamma \frac{\sqrt{d}}{\log d} \cdot \tau \rbrace
	= \lbrace d\in\mathcal{D}_D(x) : L(1,\chi_d) \geq e^\gamma \tau_0(d) \rbrace
\]
where
\[
	\tau_0(d)= \frac{2 \log \varepsilon_d}{\log d} \cdot \tau
	= \left(1 + \frac{2\log(q+P(d)/\sqrt{d})}{\log d} \right) \cdot \tau.
\]
On the one hand, we see that
\begin{equation}
\label{eq:S(x) upper bound}
	\vert S(x) \vert \leq \vert \lbrace d\in \mathcal{D}_D(x) : L(1,\chi_d)\geq e^\gamma \tau \rbrace. \vert
\end{equation}
On the other hand, we see that for $d \geq \sqrt{x}$ we have
\[
	\tau_0(d)\leq \left( 1+\frac{4\log(q+P(1))}{\log x }\right) \cdot \tau,
\]
so that
\begin{eqnarray}
	\vert S(x) \vert 
	& \geq &
	\vert \lbrace d\in \mathcal{D}_D(x) : L(1,\chi_d)\geq e^\gamma \tau_0(d), d \geq \sqrt{x} \rbrace \vert
	\nonumber
	\\
	& \geq &
	\vert \lbrace d\in \mathcal{D}_D(x) : L(1,\chi_d)\geq e^\gamma \left( 1+\frac{4\log(q+P(1))}{\log x }\right) \cdot \tau \rbrace \vert
	- \vert \mathcal{D}_D(\sqrt{x}) \vert.
	\label{eq:S(x) lower bound}
\end{eqnarray}

Using Theorems \ref{thm:Distribution} and \ref{thm:ExponentialDecay} on (\ref{eq:S(x) upper bound}) and (\ref{eq:S(x) lower bound}) and taking $x$ large, the result follows.
\end{proof}

\section{The average incidence of a class number in a family: Proof of Theorem \ref{thm:F-average}}

We closely follow the proof of Theorem 1.6 in  \cite{DL}. Recall that given a polynomial $D(n)$ of continued fraction discriminant type, $\F$ is  the number of discriminants in the family $\mathcal{D}_D$ with class number $h$. In order to obtain an asymptotic formula for $\sum_{h\leq H} \F$,
we first show that we can restrict our attention to discriminants $d\in \mathcal{D}_D$ such that  $d\leq X:=H^2(\log H)^8$.  Indeed, if $d\geq X$ and $h(d)\leq H$ then by the class number formula \eqref{eq:dirichlet-class-number-formula} we must have $L(1,\chi_d)\ll 1/(\log H)^{3}$. However, it follows from Tatuzawa's refinement of Siegel's Theorem \cite{Ta} that for large $d$, we have $L(1,\chi_d)\geq 1/(\log d)^2$ with at most one exception. Therefore, we obtain
  \begin{equation}
\label{eq:F-average-truncation}
	\sum_{h\leq H} \F
	= \sum_{\substack{d \in \mathcal{D}_D(X)\\ h(d) \leq H}} 1
	+ O(1).
\end{equation}

We estimate the main term in \eqref{eq:F-average-truncation} by using the smoothing function
\[
	I_{c,\lambda,N}(u):=\frac{1}{2\pi i}\int_{c-i\infty}^{c+i\infty}u^{s}\left(\frac{e^{\lambda s}-1}{\lambda s}\right)^{N}\frac{ds}{s},
\]
where $c=1/\log H$, $N$ is a positive integer, and $0<\lambda\leq 1$ is a real number to be chosen later. Using (\ref{eq:F-average-truncation}) together with  (4.19) of \cite{DL}, we obtain
\begin{equation}
\label{eq:F(h)-integral-inequality}
\sum_{h\leq H}\F \leq\frac{1}{2\pi i}\int_{c-i\infty}^{c+i\infty}\sum_{\substack{d\in \mathcal{D}_D(X)}
}\frac{H^{s}}{h(d)^{s}}\left(\frac{e^{\lambda s}-1}{\lambda s}\right)^{N}\frac{ds}{s}+O\left(1\right)\leq\sum_{h\leq e^{\lambda N}H}\F.
\end{equation}
By Theorem \ref{thm:ComplexMoments} and Proposition \ref{prop:average of character-ex}, there exists a constant $B>0$ such that for all $x\geq \sqrt{X}$ and any complex number $\vert z \vert \leq T := B \log X / (\log_2 X \log_3 X) $, we have
\begin{equation}\label{RecallComplex}
\sumst_{d\in\mathcal{D}_D(x)}L(1,\chi_{d})^{z}=C_{1}y(x)\mathbb{E}(L(1,\mathbb{X}_D)^{z})+O\left(x^{1/2}\exp\left(-\frac{\log x}{20\log\log x}\right)\right)
\end{equation}
where
\[
	C_1=\left( 1-\frac{c(4)}{4} \right) \prod_{p>2}\left(1-\frac{c(p)}{p^{2}}\right).
\]
Now if $\Re(z) > -1/2$, then the contribution of the possible  exceptional discriminants to the complex moments in \eqref{RecallComplex} is $\ll x^{1/4}(\log x)^{1/2}$, since there are $\ll \log x$ of them for $d \leq x$, and $L(1,\chi_d)\gg (\log d) / \sqrt{d} $. Thus, for $x\geq \sqrt{X}$ and any complex number $z$ such that $\Re(z) > -1/2$ and $|z|\leq T$, we have
\begin{equation}\label{WithExceptional}
\sum_{d\in\D}L(1,\chi_{d})^{z}=C_{1}y(x)\mathbb{E}(L(1,\mathbb{X}_D)^{z})+O\left(x^{1/2}\exp\left(-\frac{\log x}{20\log\log x}\right)\right).
\end{equation}
For brevity, we define
$$
	\ell(x):=\frac{\sqrt{x}}{\log(p(x)+q\sqrt{D(x)})}.
$$
Then we have $h(d)=\ell(d)L(1, \chi_d)$ by the class number formula \eqref{eq:dirichlet-class-number-formula}. Hence, using integration by parts, we deduce from \eqref{WithExceptional} that
\begin{multline}
\label{eq:average-h(d)^(-s)}
	\sum_{\substack{d\in\mathcal{D}_{\text{ch}}(X)}}
	h(d)^{-s}=C_1 \mathbb{E}(L(1,\mathbb{X}_D)^{-s})
	\left(
		\int_{1}^{X}y'(x)\ell(x)^{-s}		dx
	\right)
	\\
	+O\left(X^{1/2}\exp\left(-\frac{\log X}{50\log\log X}\right)\right)
\end{multline}
for $\vert s \vert \leq T $ and $\text{Re}(s) = c$.

 Since $h(d)\geq 1$ and $\vert e^{\lambda s}-1\vert\leq 3$ for large enough $H$, we see that the contribution of the region $\vert s\vert>T$ to the integral in (\ref{eq:F(h)-integral-inequality}) is 
\[
\ll X^{1/2}\left(\frac{3}{\lambda}\right)^{N}\int_{\substack{\vert s\vert>T\\
\Re(s)=c
}
}\frac{\vert ds\vert}{\vert s\vert^{N+1}}\ll\frac{X^{1/2}}{N}\left(\frac{3}{\lambda T}\right)^{N}.
\]
We also have $\vert(e^{\lambda s}-1)/\lambda s\vert\leq 4$ for large enough $H$. Therefore, it follows from (\ref{eq:average-h(d)^(-s)}) that the integral in (\ref{eq:F(h)-integral-inequality}) equals
\begin{equation}
\label{eq:F-truncated-integral}
\frac{1}{2\pi i}\int_{\substack{\vert s\vert\leq T\\
\Re(s)=c
}
}
C_1 \mathbb{E}(L(1,\mathbb{X}_D)^{-s})\left(\int_{1}^{X}y'(x)\ell(x)^{-s}	dx\right)H^{s}\left(\frac{e^{\lambda s}-1}{\lambda s}\right)^{N}\frac{ds}{s}+\mathcal{E}
\end{equation}
where
\[
	\mathcal{E}\ll\frac{X^{1/2}}{N}\left(\frac{3}{\lambda T}\right)^{N}+\frac{4^{N}T}{c}X^{1/2}\exp\left(-\frac{\log X}{50\log\log X}\right).
\]
Choosing $\lambda=e^{10}/T$ and $N=[A\log\log H]$ for a constant $A>1$ gives
\[
	\mathcal{E}\ll_{A}\frac{H}{(\log H)^{A}}.
\]
Extending the main term of (\ref{eq:F-truncated-integral}) to the entire line $\Re(s)=c$, we see that it equals
\begin{multline}
\label{eq:F-expected-integral}
\frac{1}{2\pi i}\int_{c-i\infty}^{c+i\infty}C_1\mathbb{E}(L(1,\mathbb{X}_D)^{-s})\left(\int_{1}^{X}y'(x)\ell(x)^{-s}dx\right)H^{s}\left(\frac{e^{\lambda s}-1}{\lambda s}\right)^{N}\frac{ds}{s}\\+O\left(\mathbb{E}\big(L(1,\mathbb{X}_D)^{-c}\big)\frac{X^{1/2}}{N}\left(\frac{3}{\lambda T}\right)^{N}\right)
\\
	= C_1 \mathbb{E} \left(
		\int_1^X I_{c,\lambda,N} \left(
			\frac{H}{\ell(x) L(1,\mathbb{X}_D)}\right)
		y'(x) dx
	\right)
	+ O_A\left( \frac{H}{(\log H)^A} \right).	
\end{multline}
To shorten our notation we define 
$Y = H L(1,\mathbb{X}_D)^{-1}$. Then
it follows from (4.19) of \cite{DL} that for $1< x\leq X$ we have
\[
I_{c,\lambda,N}\left(\frac{H}{\ell(x) L(1,\mathbb{X}_D)}\right)=\begin{cases}
1 & \text{ if }\ell(x)\leq Y,\\
\in[0,1] & \text{ if }Y< \ell(x)\leq e^{\lambda N}Y,\\
0 & \text{ if }\ell(x)>e^{\lambda N}Y.
\end{cases}
\]
Furthermore, note that  $
\ell(x)=(2\sqrt{x})/(\log x+\psi(x))
$ for some $\psi(x)$ that verifies $k_1 \leq \psi(x)\leq  k_2$, where $k_1=2 \log q$ and $k_2=2 \log(q+p(1)/\sqrt{D(1)})$. Thus, if for a constant $c$ we define
\[
	\ell_c(x)=\frac{2\sqrt{x}}{\log x + c},
\]
then we have $ \ell_{k_2}(x) \leq \ell(x) \leq \ell_{k_1}(x)$, and therefore
\[
I_{c,\lambda,N}\left(\frac{H}{\ell(x) L(1,\mathbb{X}_D)}\right)=\begin{cases}
1 & \text{ if }\ell_{k_1}(x)\leq Y,\\
0 & \text{ if }\ell_{k_2}(x)>e^{\lambda N}Y,\\
\in[0,1] & \text{otherwise}.\\

\end{cases}
\]
For any $c>0$ the function $\ell_c(x)$ is strictly increasing on $(e^2, \infty)$ and hence is invertible on this domain.  Let $g_c$ be its inverse function.   Then, we obtain
\begin{multline}
\label{eq:integral-I-dx}
\int_{1}^{X}I_{c,\lambda,N}\left(\frac{H}{\ell(x) L(1,\mathbb{X}_D)}\right)y'(x)dx=\min\left(y(g_{k_1}(Y)),y(X)\right)
\\
+O\left(y(g_{k_2}(e^{\lambda N}Y))-y(g_{k_1}(Y))+1\right).
\end{multline}
Note that for any $c>0$ we have 
	$g_c(x)=x^{2}\big(\log x+ O_c(\log\log x)\big)^2$ for $x\geq e^2$. Moreover, if $g_{k_1}(Y)> X$ then $Y> \ell_{k_1}(X)$ and hence $L(1,\X)\ll 1/(\log H)^3 $. 
Therefore, it follows from Theorem \ref{thm:ExponentialDecay} that
\begin{align*}
\mathbb{E}\left(\min\left(y(g_0(Y)),y(X) \right)\right)
&=\mathbb{E}\left(y(g_0(Y))\right) +O\Big(X^{1/2} \exp\left(-\log^2 H\right)\Big)\\
&= \frac{1}{\sqrt{a}}\mathbb{E}\left(L(1, \X)^{-1}\right) H\log H  + O(H\log_2 H).
\end{align*}
Furthermore, a similar argument shows that
\begin{align*}
\ex\left(y(g_2(e^{\lambda N}Y))-y(g_0(Y))\right)
&= \left(e^{\lambda N}-1\right) \mathbb{E}\left(L(1, \X)^{-1}\right) H\log H + O(H\log_2 H)\\
&\ll H (\log_2 H)^2 \log_3 H.
\end{align*}
Combining this estimate with  with equations (\ref{eq:F(h)-integral-inequality}), (\ref{eq:F-truncated-integral}), (\ref{eq:F-expected-integral}) and (\ref{eq:integral-I-dx}) we deduce
$$
\sum_{h\leq H}\F \leq \frac{C_1}{\sqrt{a}}\mathbb{E}\left(L(1, \X)^{-1}\right)H\log H + O\left(H (\log_2 H)^2 \log_3 H\right)\leq\sum_{h\leq e^{\lambda N}H}\F.
$$
Replacing $e^{\lambda N}H$ by $H$ in the right hand side inequality yields
$$
\sum_{h\leq H}\F= \frac{C_1}{\sqrt{a}}\mathbb{E}\left(L(1, \X)^{-1}\right)H\log H + O\big(H (\log_2 H)^2 \log_3 H\big).
$$
Finally, we set $C_2=C_1/\sqrt{a}$.

\subsection{Computing $C_2$}

For any $z\in \mathbb{C}$ we have
\begin{equation}
\label{eq:E(L) definition}
\ex\left(L(1,\X)^z\right)= \prod_{p} E_p(z),
\end{equation}
where
\begin{equation}
\label{eq:E_p(z) definition}
E_p(z) :=
\ex \left[ \left(\sum_{n=0}^\infty \frac{\X(p^n)}{p^n}\right)^z \right].
\end{equation}
In the case of odd $p$ we have
\begin{equation}
\label{eq:E_p(z) odd identity}
E_p(z)=\ex\left(\left(1-\frac{\X(p)}{p}\right)^{-z}\right)= \alpha_p \left(1-\frac{1}{p}\right)^{-z}+
\beta_p \left(1+\frac{1}{p}\right)^{-z}+\gamma_p.
\end{equation}

We see that the series in parentheses of (\ref{eq:E_p(z) definition}) in the case of $p=2$ is equal to
\begin{equation*}
	1+\frac{\X(2)}{2}
	+\sum_{n=0}^\infty \frac{\X(2^{2n+2})}{2^{2n+2}}
	+\sum_{n=0}^\infty \frac{\X(2^{2n+3})}{2^{2n+3}}
	= 
	1+\frac{\X(2)}{2}
	+\frac{1}{3} \left( \X(4)+\frac{\X(8)}{2} \right),
\end{equation*}
and hence by definition, we have the expression
\begin{equation}
\label{eq:E_2 identity}
E_2(z)=\sum_{\vec{x}\in \lbrace 0, \pm 1 \rbrace^3}
	\mathbb{P}\left(\begin{array}{c} \X(2)=x_1, \\ \X(4)=x_2, \\ \X(8)=x_3 \end{array}\right)
	\left(1+\frac{x_1}{2}+\frac{x_2}{3}+\frac{x_3}{6}\right)^z.
\end{equation}

We can exactly compute $C_2$ for specific families using (\ref{eq:E(L) definition}), (\ref{eq:E_p(z) odd identity}) and (\ref{eq:E_2 identity}). For example, taking Chowla's family $D(n)=4n^2+1$, we see that $C_2=1/2G$ where $G=L(1,\chi_{-1})$ is Catalan's constant. (Here, $\chi_{-1}$ is the primitive character modulo 4). For Yokoi's family, we have $D(n)=n^2+4$. However, since $D(n)$ must be squarefree, we cannot have $n$ even, and so we can without loss of generality instead take $D(n)=(2n-1)^2+4=4n^2-4n+5$. In this case, it can be shown that $C_2$ takes on the same value.
For the family arising from $(\sqrt{D(n)}+1)/2=[n+1,\overline{1,2n+1}]$, we have $D(n)=4n^2+12n+5$. Through a calculation, it can be shown that
\[
	C_2=\prod_p \left( 1-\frac{2}{p^2}\right) \left(1-\frac{1}{p^2}\right)
	=\zeta(2)^{-1}(2C_{FT}-1),
\]
where $C_{FT}$ is the Feller-Tornier constant.

\end{document}